\newtheorem{theorem}{Theorem}%[section]
\newtheorem{prop}[theorem]{Proposition}
\newtheorem{lem}[theorem]{Lemma}
\newtheorem{defi}[theorem]{Definition}
\newtheorem{remark}[theorem]{Remark}
\newtheorem{example}[theorem]{Example}
\begin{document}
\title{On three-dimensional Poisson quasi-Nijenhuis\\ manifolds and Haantjes structures}
%\title{Some remarks on Poisson quasi-Nijenhuis\\manifolds and Haantjes structures}
\date{} 
%\author{The authors}

\author{E.\ Chu\~no Vizarreta${}^{1}$, I.\ Mencattini${}^2$,  
M.\ Pedroni${}^{3,4}$}

\affil{
{\small  $^1$Unidade Acad\^emica de Belo Jardim, Universidade Federal Rural de Pernambuco, Brazil}\\
{\small eber.vizarreta@ufrpe.br  %(ORCID 0000-0002-4893-9186)
}\\
\medskip
{\small $^2$Instituto de Ci\^encias Matem\'aticas e de Computa\c c\~ao,  Universidade de S\~ao Paulo, Brazil}\\
{\small igorre@icmc.usp.br %(ORCID 0000-0002-1295-9396)
}\\
\medskip
{\small $^3$Dipartimento di Ingegneria Gestionale, dell'Informazione e della Produzione,  Universit\`a di Bergamo, Italy}\\
{\small marco.pedroni@unibg.it %(corresponding author, ORCID 0000-0002-7358-0945)}
}\\
\medskip
{\small  $^4$INFN, Sezione di Milano-Bicocca, Piazza della Scienza 3, 20126 Milano, Italy}
}

\maketitle
\abstract{\noindent
\begin{comment} %%%%%%%% Abstract as in v13
In this note we first characterize Poisson quasi-Nijenhuis structures on three-dimensional oriented manifolds %, 
whose underlying Poisson tensor never vanishes, % is maximally non-degenerate, 
and we show that each of these structures is involutive. Then we prove that every such three-dimensional 
Poisson quasi-Nijenhuis manifold is a Haantjes manifold 
and that %, under a further mild assumption on the underlying Poisson tensor, 
it carries a generalized Lenard-Magri chain.
\end{comment} 
In this note we first characterize Poisson quasi-Nijenhuis structures on three-dimensional oriented manifolds 
whose underlying Poisson tensor never vanishes. We then apply this result to show that each of these structures is  (locally) a deformation of a PN structure and is involutive. Finally, we prove that every such three-dimensional 
Poisson quasi-Nijenhuis manifold is a Haantjes manifold and that it carries a generalized Lenard-Magri chain.

\medskip\par\noindent
{\bf Keywords:} Integrable systems; Poisson quasi-Nijenhuis manifolds; Generalized Lenard-Magri chains; Haantjes tensors; Haantjes manifolds.
\medskip\par\noindent
{\bf MSC codes:} 37J35, 53D17, 70H06.}  

\baselineskip=0,6cm

\section{Introduction}

The notion of Poisson-Nijenhuis structure, i.e., a \emph{compatible} pair of a Poisson bivector $\pi$ and a 
torsionless tensor field $N$ of type $(1,1)$, is instrumental to the theory of classical integrable systems, since it provides a geometrical framework to produce involutive  families of conserved quantities for the underlying Hamiltonian dynamical system, see \cite{KM, Magri-Morosi}. It is important to stress that this property hinges crucially on the hypothesis that the Nijenhuis torsion of $N$ is equal to zero, see \eqref{eq:torsionA} and Section \ref{sec:2} for the relevant definitions. In fact, even a controlled change on this assumption destroys, in general, the possibility to produce the desired families of first integrals, spoiling a possible use of the newborn geometrical structure in the theory of integrable models.

An example of this state of affairs is provided by the so called Poisson quasi-Nijenhuis structures, i.e., a \emph{compatible} triple of a Poisson bivector $\pi$, a 3-form $\phi$ and a $(1,1)$ tensor field $N$, whose Nijenhuis torsion is given in  \eqref{eq:PqNtor}. %, see Section \ref{sec:2}. 
Although this class of structures represents a very interesting generalization of the Poisson-Nijenhuis one (see \cite{SX, Antunes2008, C-NdC-2010, BursztynDrummondNetto2021, DMP2024}) and has interesting applications in mathematical physics 
(see, for example, \cite{Zucchini}), it did not play any role in the theory of integrable systems until the discover 
of its relation with 
$n$-particle Toda systems, see \cite{FMOP2020,FMP2024}. In these papers the notion of \emph{involutive} Poisson quasi-Nijenhuis structure was introduced, see Definition \ref{def:invpqn} hereafter, and a set of \emph{sufficient} conditions for a Poisson quasi-Nijenhuis structure to be involutive were presented, see Theorem \ref{theo:inv} below. It is worth mentioning that, since these conditions are very stringent, the task of finding (interesting) examples of involutive Poisson quasi-Nijenhuis structures is a non-trivial open problem, see for example \cite{FMP2023}.

%Stealing 
In line with the title of \cite{Kos-Recursion}, the attempt to go beyond recursion operators to find a more general theoretical framework than the Poisson-Nijenhuis one %but still allowing the definition of chains of conserved quantities, 
was recently undertaken in \cite{MagriHaa, MagriVeselov, MagriSymm} and \cite{tempestatondoben, tempestatondo, tempestatondohigher,tempestatondoclass}. In these works, one of the main novelties is the presence of (families of) $(1,1)$ tensor fields whose \emph{Haantjes} torsion is equal to zero. 
As the Nijenhuis one, the Haantjes torsion of  $N$ is a 2-form with values in the tangent bundle of the underlying manifold, see  \eqref{eq:haant}, which controls the Frobenius integrability of the 2-planes generated by pairs of eigenvectors of $N$. 
Its relevance in the theory of integrable systems of hydrodynamic type was already observed in 
\cite{Bogo96-182,FeraMar}, but its importance in the theory of finite-dimensional integrable system was unnoticed until the above mentioned works of Magri, Tempesta and Tondo.

The present note is a \emph{first attempt} to construct a bridge between Poisson quasi-Nijenhuis and Haantjes structures, aiming to find, in a near future, more applications of the former to the theory of finite-dimensional integrable systems.
Our results concern oriented three-dimensional manifolds and generalize the one proved in \cite{Cheng-Sheng} for Poisson-Nijenhuis (PN from now on) manifolds. Assuming that the Poisson tensor never vanishes, we describe any Poisson quasi-Nijenhuis manifold in terms of few geometrical objects. Moreover, we show that such manifolds are all involutive. Finally, we show that they are Haantjes manifolds with a generalized Lenard-Magri chain.

A more detailed plan of the paper is as follows. In Section \ref{sec:2} we first collect some general information about Poisson quasi-Nijenhuis (PqN from now on) structures and we remind the definition of involutive PqN manifold. Then we recall two results, the first giving a set of sufficient conditions %guaranteeing 
for the involutivity of a PqN  structure, see Theorem \ref{theo:inv}, the second enclosing a general statement about 
deformations of PqN structures, see Theorem \ref{thm:defthe}. 
Section \ref{sec:3} is devoted to characterize %\textcolor{red}{(maximally non-degenerate)} 
PqN structures (with never vanishing Poisson tensor) on an oriented three-dimensional manifold, see %Proposition
Theorem \ref{prop 3D PqN}. %, and to prove that such structures are involutive, see %Theorem
As an application of this result we prove that such structures are (locally) obtained by deforming a PN structure via a closed 2-form, see Proposition \ref{prop:local}, and that they are involutive, see Proposition \ref{thm:inv3}. Moreover, in Section \ref{sec:4} they are shown to be Haantjes structures, see Theorem \ref{theo:haan}, and that they carry a generalized Lenard-Magri chain, see Theorem \ref{theo:Len-Ma-Ge}. In Subsection \ref{ss:tempton} we make a few final comments aiming to link the structures introduced in the recent works \cite{tempestatondoben, tempestatondo, tempestatondohigher,tempestatondoclass,TondoLag} with the geometry of the PqN 
manifolds studied in this note. 
In Section \ref{sec:Appendix} we collect the proof of an identity used in the main text, an alternative proof of Proposition \ref{thm:inv3} and few facts about the Haantjes torsion. We close this manuscript with Section \ref{sec:comfin}, which encloses a few final remarks about the results obtained in this paper and some possible future lines of investigation opened in this work.

\subsection{Notations and conventions} Hereafter we will systematically use the following conventions. All manifolds, 
vector bundles and their sections will be smooth.  The space of sections of the vector bundle $V$ will be denoted with $\Gamma(V)$, with the exception of the sections of $TM$, i.e., vector fields, and of $\Lambda^k(T^\ast M)$, i.e., $k$-differential forms, which will be denoted by $\mathfrak X(M)$ and, respectively, $\Omega^k(M)$. If $V$ is a vector bundle over a manifold $M$ and $A\in\text{End}_{C^\infty(M)}(V)$, $A^\ast$ will denote the endomorphism of $V^\ast$, the dual bundle of $V$, defined by $\langle A^\ast\eta,s\rangle=\langle\eta, A s\rangle$ for all $s\in\Gamma(V)$ and $\eta\in\Gamma(V^\ast)$. 
Every bivector field $P\in\Gamma(\Lambda^2(TM))$ defines a unique (and it is defined uniquely by) $P^\sharp\in\text{Hom}_{C^\infty(M)}(T^\ast M,TM)$ via the  
formula $\langle\beta,P^\sharp\alpha\rangle=P(\alpha,\beta)$, for all $\alpha,\beta\in\Omega^1(M)$. Similarly, every 2-form $\omega\in\Omega^2(M)$ defines a unique (and it is defined uniquely by) $\omega^\flat\in \text{Hom}_{C^\infty(M)}(TM,T^\ast M)$ via $\langle\omega^\flat X,Y\rangle=\omega(X,Y)$, for all $X,Y\in\mathfrak X(M)$. Finally, recall that to every $N\in\text{End}_{C^\infty(M)}TM$, i.e., a tensor field of type $(1,1)$, one can associate its Nijenhuis torsion $T_N$, which is the 2-form on $M$ with values in $TM$ defined by 
\begin{equation}
T_N(X,Y)=[NX,NY]-N([NX,Y]+[X,NY]-N[X,Y]).\label{eq:torsionA}
\end{equation}
The (1,1) tensor field $N$ will be called Nijenhuis torsion-free, or, more simply, torsion-free (or a Nijenhuis operator) 
if $T_N\equiv 0$.

\par\bigskip\noindent
{\bf Acknowledgments.} 
%We thank ???Murilo do Nascimento Luiz, Franco Magri, Giovanni Ortenzi, and Giorgio Tondo??? for useful discussions. %MP thanks the ICMC-UPS, {\em Instituto de Ci\^encias Matem\'aticas e de Computa\c c\~ao\/} of the University of S\~ao Paulo, and the Department of Mathematics and its Applications of the University of Milano-Bicocca for their hospitality, the Funda\c c\~ao de Amparo \`a Pesquisa do Estado de S\~ao Paulo - FAPESP, Brazil, for supporting his visit in 2023 to the ICMC-USP with the grant 2022/02454-8, and the University of Bergamo, for supporting his visit in 2024 to the ICMC-USP within the program {\em Outgoing Visiting Professors}.
%This project has received funding from the European Union's Horizon 2020 research and innovation programme under the Marie Sk{\l}odowska-Curie grant no 778010 {\em IPaDEGAN} as well as by the Italian PRIN 2022 (2022TEB52W) - PE1 - project {\em The charm of integrability: from nonlinear waves to random matrices}. 
All authors gratefully acknowledge the auspices of the GNFM Section of INdAM and of the Brazilian Centre of Geometry grant No. 2024/00923-6, Fapesp.
%, and the anonymous referee for suggesting some improvements in the presentation.
%We are grateful to the anonymous referee, whose suggestions helped us to substantially improve the content and the presentation of our manuscript.

\section{Recollection of Poisson quasi-Nijenhuis structures}\label{sec:2}

A Poisson-Nijenhuis structure on a manifold $M$ is a pair $(\pi,N)$ of a Poisson bivector  $\pi$ and a torsion-free tensor field $N$ of type $(1,1)$, satisfying the following compatibility conditions: 
\begin{equation}
\label{eq:pncon}
\begin{aligned}
&\mbox{(C1)}\quad N\circ\pi^\sharp=\pi^\sharp\circ N^\ast=:\pi^\sharp_N;%\nonumber
\\
&\mbox{(C2)}\quad[\alpha,\beta]_{\pi_N}=[N^\ast\alpha,\beta]_{\pi}+[\alpha,N^\ast\beta]_{\pi}-N^\ast[\alpha,\beta]_{\pi},\qquad\forall\alpha,\beta\in\Omega^1(M),
\end{aligned}
\end{equation}
where, if $P$ is a bivector field, 
the bracket $[-,-]_P:\Omega^1(M)\times\Omega^1(M)\rightarrow\Omega^1(M)$ is defined by 
\[
[\alpha,\beta]_P=\mathcal L_{P^\sharp\alpha}\beta
-\mathcal L_{P^\sharp\beta}\alpha-d\langle\beta,P^\sharp\alpha\rangle,
\]
and $\mathcal L_X$ is the Lie derivative with respect to the vector field $X$. A manifold $M$ endowed 
with a PN structure $(\pi,N)$ will be called a PN manifold and will be denoted by the triple $(M,\pi,N)$. 

\begin{remark}\label{rem:MMcon}
The second formula in 
\eqref{eq:pncon} is often written as $C(\pi,N)\equiv 0$, where 
\begin{equation}
C(\pi,N)(\alpha,\beta)=[\alpha,\beta]_{\pi_N}-[N^\ast\alpha,\beta]_{\pi}-[\alpha,N^\ast\beta]_{\pi}+N^\ast[\alpha,\beta]_{\pi},\qquad 
\alpha,\beta\in\Omega^1(M),\label{eq:MMconc}
\end{equation}
is the so called Magri-Morosi concomitant. It is also worth recalling that a torsion-free $(1,1)$ tensor field $N$ is often called a recursion operator since the vector space of all $X\in\mathfrak X(M)$ such that $\mathcal L_X N=0$ is an $N$-invariant Lie subalgebra of $\mathfrak X(M)$. This means, in particular, that if $X$ is an infinitesimal symmetry of $N$, i.e., 
$\mathcal L_XN=0$, then $N^kX$ will also be, for all $k\geq 1$.
\end{remark}
To every PN structure one can associate the space
\begin{equation}
\Omega^1_{Ham}(M)=\{\alpha\in\Omega^1(M)\mid d\alpha=0=d_N\alpha\},
\end{equation}
where $d_N:\Omega^\bullet(M)\rightarrow\Omega^{\bullet+1}(M)$ is defined by $d_N=i_N\circ d-d\circ i_N$ and $i_N$ is the degree-zero endormorphism of $\Omega^\bullet(M)$ defined on $\omega\in\Omega^k(M)$ by $i_N\omega(X_1,\dots,X_k)=\sum_{j=1}^k\omega(X_1,\dots,NX_j,\dots,X_k)$. The following proposition encloses an important property of $\Omega^1_{Ham}(M)$. 
\begin{prop}[\cite{Magri-Morosi,Bonechi}] The space $\Omega^1_{Ham}(M)$ is $N^\ast$-invariant. Moreover,
\begin{enumerate}
\item For every $k\geq 0$, it is closed with respect to $[-,-]_{\pi_{N^k}}$.
\item Given $\alpha\in\Omega^1_{Ham}(M)$ and $s\geq 1$, set $\alpha_s={N^\ast}^s\alpha$. Then 
\[
[\alpha_i,\alpha_j]_{\pi_{N^k}}=0
\] 
for all $k\geq 0$ and $i,j\geq 1$.
\end{enumerate}
\end{prop}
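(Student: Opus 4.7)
The plan is to prove the three assertions in turn, using throughout the simple reformulation that, since $d_N\alpha = i_Nd\alpha - d(N^*\alpha)$ for a 1-form $\alpha$, one has $\alpha\in\Omega^1_{Ham}(M)$ if and only if $d\alpha = 0$ and $d(N^*\alpha) = 0$. The $N^*$-invariance then reduces to showing that $d\alpha = 0 = d(N^*\alpha)$ forces $d(N^{*s}\alpha) = 0$ for every $s\geq 2$. This is the step where both the torsion-freeness of $N$ and the compatibility (C2) enter: the standard route is to verify that $X_\alpha = \pi^\sharp\alpha$ is an infinitesimal symmetry of $N$ for $\alpha\in\Omega^1_{Ham}(M)$ (cf.\ Remark \ref{rem:MMcon}), so that $X_\alpha$ preserves every power $N^k$, which, combined with $d\alpha = 0$ and Cartan's formula, propagates to the closedness of $N^{*k}\alpha$ for every $k\geq 0$.

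The closure of $\Omega^1_{Ham}(M)$ under $[-,-]_{\pi_{N^k}}$ relies on the hereditary property that $(\pi,N^k)$ is again a PN structure for every $k\geq 0$: torsion-freeness of $N^k$ follows from the Nijenhuis identity with $T_N = 0$, while $C(\pi,N^k) = 0$ telescopes from $C(\pi,N) = 0$. In particular $\pi_{N^k}$ is Poisson and $[-,-]_{\pi_{N^k}}$ is a Lie bracket on $\Omega^1(M)$. For $\alpha,\beta\in\Omega^1_{Ham}(M)$, $d$-closedness of $[\alpha,\beta]_{\pi_{N^k}}$ is immediate from $d[\alpha,\beta]_P = \mathcal L_{P^\sharp\alpha}d\beta - \mathcal L_{P^\sharp\beta}d\alpha$, while $d_N$-closedness is equivalent, via the reformulation, to $d(N^*[\alpha,\beta]_{\pi_{N^k}}) = 0$; the latter follows by applying (C2) to the PN pair $(\pi_{N^k},N)$ to obtain
\[
N^*[\alpha,\beta]_{\pi_{N^k}} = [N^*\alpha,\beta]_{\pi_{N^k}} + [\alpha,N^*\beta]_{\pi_{N^k}} - [\alpha,\beta]_{\pi_{N^{k+1}}},
\]
whose three summands are $d$-closed by the $N^*$-invariance of $\Omega^1_{Ham}(M)$ together with the $d$-closedness step applied to the pairs $(\pi_{N^k},N)$ and $(\pi_{N^{k+1}},N)$.

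The involution then collapses to a short calculation. Since every $\alpha_s\in\Omega^1_{Ham}(M)$ is closed for $s\geq 1$, the Cartan-type identity $[\gamma,\delta]_P = d\,P(\gamma,\delta)$, valid for any Poisson bivector $P$ and any closed 1-forms $\gamma,\delta$, reduces the claim to $\pi_{N^k}(\alpha_i,\alpha_j)\equiv 0$. A direct computation using (C1) iteratively gives
\[
\pi_{N^k}(\alpha_i,\alpha_j) = \langle\alpha_j, \pi^\sharp N^{*k}\alpha_i\rangle = \langle\alpha_{i+j+k}, \pi^\sharp\alpha\rangle,
\]
and a second application of (C1) combined with the skew-symmetry of $\pi$ shows that $\langle\alpha_s, \pi^\sharp\alpha\rangle = -\langle\alpha_s, \pi^\sharp\alpha\rangle$ for every $s\geq 0$, whence $\pi_{N^k}(\alpha_i,\alpha_j) = 0$ and therefore $[\alpha_i,\alpha_j]_{\pi_{N^k}} = 0$.

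I expect the main obstacle to be the $N^*$-invariance step, since it requires the non-trivial translation between the 1-form identity $d_N\alpha = 0$ and the symmetry condition $\mathcal L_{\pi^\sharp\alpha}N = 0$, a step that makes essential use of both $T_N = 0$ and the PN compatibility (C2). Once invariance is in hand, the closure follows from (C2) applied to the hereditary PN structures $(\pi_{N^k},N)$, and the involution reduces to direct manipulations with (C1) and the skew-symmetry of $\pi$.
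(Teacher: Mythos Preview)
The paper does not prove this proposition: it is stated with attribution to \cite{Magri-Morosi,Bonechi} and used only as background, so there is no in-paper argument to compare against. On its own merits, your treatment of items (1) and (2) is sound once the $N^*$-invariance is in hand: the reduction $[\gamma,\delta]_P = d\,P(\gamma,\delta)$ for closed $\gamma,\delta$ and the (C1)-based computation $\pi_{N^k}(\alpha_i,\alpha_j)=\pi(\alpha,\alpha_{i+j+k})=0$ are correct, and the closure argument via (C2) for the hereditary pair $(\pi_{N^k},N)$ is the standard one.

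The genuine gap is in your route to $N^*$-invariance. Even granting that $X_\alpha=\pi^\sharp\alpha$ satisfies $\mathcal L_{X_\alpha}N=0$, Cartan's formula only yields $\mathcal L_{X_\alpha}(N^{*k}\alpha)=0$, i.e., $i_{X_\alpha}d(N^{*k}\alpha)=-d\langle N^{*k}\alpha,X_\alpha\rangle$. The right-hand side vanishes by the same (C1)/skew-symmetry trick you use later, but you are then left with the single contraction $i_{X_\alpha}d(N^{*k}\alpha)=0$, which does not force the 2-form $d(N^{*k}\alpha)$ to vanish (unless $\pi$ is nondegenerate, which is not assumed). In fact the $N^*$-invariance of $\Omega^1_{Ham}(M)$ needs neither $\pi$ nor (C2): it follows from $T_N=0$ alone. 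If $d\alpha=0=d(N^*\alpha)$, then rewriting $N^2[X,Y]=N[NX,Y]+N[X,NY]-[NX,NY]$ and inserting this into the intrinsic formula for $d(N^{*2}\alpha)(X,Y)$ collapses the latter to $-d\alpha(NX,NY)=0$; iterating gives $d(N^{*s}\alpha)=0$ for all $s$. Your assertion that ``both the torsion-freeness of $N$ and the compatibility (C2) enter'' at this step is therefore off the mark.
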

In other words, $\Omega^1_{Ham}(M)$ is a receptacle for the so called Lenard-Magri, or bi-Hamiltonian, chains.  
An important example is obtained as follows. For every $k\geq 1$, let $I_k:=\frac{1}{2k}\text{Tr}(N^k)$, where $\text{Tr}$ denote the trace operator. Then one can prove that $\{dI_k\}_{k\geq 1}\subset\Omega^1_{Ham}(M)$. More precisely, a direct computation shows that, for every $k\geq 1$, 
\begin{equation}
N^\ast dI_k=dI_{k+1},\label{eq:PNrec}
\end{equation}
which yields the fundamental involutivity property
\begin{equation}
\{I_i,I_j\}=0
\qquad\forall i,j\geq 1,\label{eq:invfam}
\end{equation}
where $\{-,-\}$ is the Poisson bracket defined by $\pi$ via the formula 
$\{f,g\}=\pi(df,dg)$, for all $f,g\in C^\infty(M)$.

\begin{example}[Das-Okubo PN structure \cite{DO}]
\label{ex:1}
On $M=\mathbb R^{2n}$, with coordinates $(p_i,q^i)$, $i=1,\dots,n$, let $\pi=\sum_{i=1}^{n}\frac{\partial}{\partial p_i}\wedge\frac{\partial}{\partial q^i}$ and 
\begin{equation}
\begin{split}
N_{DO}&=\sum_{i=1}^n p_i\Big(\frac{\partial}{\partial q^i}\otimes dq^i
+\frac{\partial}{\partial p_i}\otimes dp_i\Big)+\sum_{i<j}\Big(\frac{\partial}{\partial q^i}\otimes dp_j-\frac{\partial}{\partial q^j}\otimes dp_i\Big)\\
&+\sum_{i=1}^{n-1}e^{q^i-q^{i+1}}\Big(\frac{\partial}{\partial p_{i+1}}\otimes dq^i-\frac{\partial}{\partial p_i}\otimes dq^{i+1}\Big).\label{eq:DO-PN}
\end{split}
\end{equation}
Then $(\pi,N_{DO})$ is a PN structure and the sequence $\{I_k\}_{k\geq 1}$ is a set of first integrals for the \emph{open $n$-particle Toda chain}. In particular, $I_1$ is the total linear momentum and $I_2$ is the energy (Hamiltonian) of this system.
\end{example}

The notion of a Poisson quasi-Nijenhuis structure, introduced in \cite{SX}, represents an interesting generalization of the Poisson-Nijenhuis one. This enhanced setting is defined by a triple $(\pi,N,\phi)$, where $\pi$ is a Poisson tensor, 
$N$ is a tensor field of type (1,1) which is compatible with $\pi$, and $\phi$ is a $d$- and $d_N$-closed 3-form such that 
\begin{equation}
T_N(X,Y)=\pi^\sharp(\phi(X,Y,-))=:\pi^\sharp (i_{X\wedge Y}\phi)%,
\qquad\forall X,Y\in\mathfrak X(M).\label{eq:PqNtor}
\end{equation}
%\begin{remark}\label{rem:trivpqn}
%We will call the PqN structure $(\pi,N,\phi)$ \emph{trivial} if $\pi=0$.
%\end{remark}
Since $T_N$ does not vanish,  the corresponding space $\Omega_{Ham}^1(M)$ is not $N^\ast$-invariant and, for this reason, it looses the property of being the natural place where to look for Lenard-Magri chains. In particular, in this more general context, the recurrence equations \eqref{eq:PNrec} assume the form
\begin{equation}
dI_{k+1}=N^\ast dI_k+\phi_{k-1},\qquad\forall k\geq 1,\label{eq:genrec}
\end{equation}
where the 1-forms $\phi_s$ are defined by the formula
\begin{equation}
\langle\phi_s,X\rangle=\frac{1}{2}\text{Tr}(N^s(i_XT_N)),\qquad\forall X\in\mathfrak X(M), s\geq 0, \label{eq:phis}
\end{equation}
and $\langle i_XT_N,Y\rangle=T_N(X,Y)$ for all vector fields $Y$. The recurrence defined in \eqref{eq:genrec} entails that, for all $k>j\geq 1$,
\begin{equation}
\{I_k,I_j\}-\{I_{k-1},I_{j+1}\}=-\langle\phi_{j-1},\pi^\sharp dI_{k-1}\rangle-\langle\phi_{k-1},\pi^\sharp dI_j\rangle,\label{eq:recursionphi}
\end{equation}
which implies that, for a general PqN structure, the family $\{I_k\}_{k\geq 1}$ fails to be 
involutive, i.e., \eqref{eq:invfam} is not satisfied for all possible choices of $i,j$. This observation justifies the following

\begin{defi}\label{def:invpqn}
A PqN structure is called involutive if \eqref{eq:invfam} holds for all $i$ and $j$.
\end{defi}

Even though this definition is quite restrictive, the following example shows that there is at least one important class of PqN structures which meet its requirement.

\begin{example}[PqN structure of the closed Toda system]\label{ex:2}
An important example of involutive PqN structure 
was discovered in \cite{FMOP2020}, see also \cite{FMP2023}, and goes as follows. Let $(M,\pi)$ and $N_{DO}$ be as in Example \ref{ex:1}, and let
\begin{equation}
\label{PqN-Toda}
\widetilde{N}=N_{DO}-e^{q^n-q^1}\Big(\frac{\partial}{\partial p^1}\otimes dq_n-\frac{\partial}{\partial p_n}\otimes dq^1\Big).
\end{equation}
If $\phi=2e^{q^n-q^1}dq^1\wedge dq^n\wedge\sum_{i=1}^ndp_i$, then one can show that $(\pi,\widetilde N,\phi)$ is an involutive PqN structure on $M$. This family of PqN structures describes the so called \emph{closed $n$-particle Toda chain}. More precisely, and in analogy with the case of the Das-Okubo PN structure, the functions ${\tilde I}_1$ and 
${\tilde I}_2$ are, respectively, the total linear momentum and the energy (Hamiltonian) of this classical integrable system. 
\end{example}

It is therefore an interesting problem to pin down a set of assumptions entailing the involutivity of a PqN structure. 
A set of involutivity %sufficient 
conditions %ensuring the involutivity of a PqN structure 
was presented in \cite{FMOP2020} and subsequently improved in \cite{FMP2024}. We recall this result in 
\begin{theorem}\label{theo:inv} Let $(\pi,N,\phi)$ be a PqN structure on $M$ and let $I_k=\frac{1}{2k}\text{Tr}\left(N^k\right)$, $k\geq 1$. Suppose there is a 2-form $\Omega$ such that:
\begin{enumerate}
\item[(a)] $\phi=-2\,dI_1\wedge\Omega$;
\item[(b)] $\Omega(X_j,Y_k)=0$ for all $j,k\geq 1$, where $Y_k=N^{k-1}X_1-X_k$ and $X_k=\pi^\sharp dI_k$. 
\end{enumerate}
Then $\{I_j,I_k\}=0$ for all $j,k\ge 1$, where $\{-,-\}$ is the Poisson bracket defined by $\pi$.
\end{theorem}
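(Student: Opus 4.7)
The plan is to use assumption (a) to write the 1-forms $\phi_s$ of \eqref{eq:phis} in a compact form, to first use this form alone to establish $\{I_m,I_1\}=0$ for every $m$, and then to feed this together with (b) into the recursion \eqref{eq:recursionphi} to telescope everything to the diagonal.

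First I would substitute $\phi=-2\,dI_1\wedge\Omega$ into $T_N(X,Y)=\pi^\sharp(i_{X\wedge Y}\phi)$, obtaining
\[
T_N(X,Y)=-2\bigl[dI_1(X)\,BY-dI_1(Y)\,BX+\Omega(X,Y)\,X_1\bigr],
\]
with $B:=\pi^\sharp\circ\Omega^\flat$ and $X_1=\pi^\sharp dI_1$. Viewing $i_XT_N$ as a $(1,1)$-tensor and computing $\tfrac{1}{2}\text{Tr}(N^s(i_XT_N))$, the three resulting trace contributions collapse under the auxiliary identity $dI_1(N^sBX)=-\Omega(X,N^sX_1)$, which follows from $N\pi^\sharp=\pi^\sharp N^\ast$ and the antisymmetry of $\pi$. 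The outcome is the compact formula
\[
\phi_s=-c_s\,dI_1+2\,\iota_{N^sX_1}\Omega,\qquad c_s:=\text{Tr}(N^sB).
\]

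Using only this formula I would prove $\{I_m,I_1\}=0$ for every $m\ge 1$. Iterating \eqref{eq:genrec} gives $dI_m=N^{\ast(m-1)}dI_1+\psi_m$ with $\psi_m=\sum_{r=0}^{m-2}N^{\ast r}\phi_{m-2-r}$; the part $\pi(N^{\ast(m-1)}dI_1,dI_1)$ of $\{I_m,I_1\}$ vanishes because $\pi(N^{\ast s}\alpha,\beta)=\pi(\alpha,N^{\ast s}\beta)$ combined with the antisymmetry of $\pi$ forces $\pi(dI_1,N^{\ast s}dI_1)=0$, and the remaining part $\pi(\psi_m,dI_1)=-\psi_m(X_1)$ reduces, via the compact formula and $dI_1(N^rX_1)=0$, to $2\sum_{r=0}^{m-2}\Omega(N^rX_1,N^{m-2-r}X_1)$, which is antisymmetric under $r\mapsto m-2-r$ and hence zero. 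With $\{I_m,I_1\}=0$ in hand, the compact formula gives $\langle\phi_s,X_m\rangle=-2\Omega(X_m,N^sX_1)$, and (b) rewrites this as $-2\Omega(X_m,X_{s+1})$ since $\Omega(X_m,N^sX_1-X_{s+1})=\Omega(X_m,Y_{s+1})=0$.

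Substituting into the recursion \eqref{eq:recursionphi}, the two $\Omega$-pairings on the right hand side cancel by the antisymmetry of $\Omega$, yielding $\{I_k,I_j\}=\{I_{k-1},I_{j+1}\}$ for every $k>j\ge 1$. Telescoping this identity closes the argument: when $k+j$ is even one reaches the diagonal $\{I_{(k+j)/2},I_{(k+j)/2}\}=0$, whereas when $k+j$ is odd one reaches a near-diagonal pair $\{I_{r+1},I_r\}$, and one further application of the identity combined with the antisymmetry of the Poisson bracket forces it to vanish. The main obstacle is the first step: once the auxiliary identity $dI_1(N^sBX)=-\Omega(X,N^sX_1)$ is established and the compact form of $\phi_s$ is in hand, the remaining two steps amount to antisymmetry cancellations and a short telescoping.
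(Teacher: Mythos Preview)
The paper does not actually prove Theorem~\ref{theo:inv}: it is quoted from \cite{FMOP2020,FMP2024} and invoked only as a black box (in Remark~\ref{rem:FMP2024}), so there is no in-paper argument to compare against.

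On its own merits your outline is sound. The expansion of $T_N$ from assumption~(a), the auxiliary identity $dI_1(N^sBX)=-\Omega(X,N^sX_1)$, and the resulting closed form $\phi_s=-c_s\,dI_1+2\,\iota_{N^sX_1}\Omega$ all check out. The vanishing of $\{I_m,I_1\}$ via the antisymmetric sum $\sum_r\Omega(N^rX_1,N^{m-2-r}X_1)$ is correct, and the use of~(b) to pass from $\Omega(X_m,N^sX_1)$ to $\Omega(X_m,X_{s+1})$ is exactly how the hypothesis enters. The final telescoping and the odd-parity endgame are fine.

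There is one point to watch. You invoke \eqref{eq:recursionphi} as printed, with the second term $\langle\phi_{k-1},\pi^\sharp dI_j\rangle$. If you re-derive that identity from \eqref{eq:genrec} you obtain $\langle\phi_{k-2},\pi^\sharp dI_j\rangle$ instead (substitute $dI_k=N^\ast dI_{k-1}+\phi_{k-2}$ and $dI_{j+1}=N^\ast dI_j+\phi_{j-1}$). With the index $k-2$ the right-hand side becomes $2\Omega(X_{k-1},X_j)+2\Omega(X_j,X_{k-1})$, which cancels by antisymmetry as you claim; with the index $k-1$ as printed one would get $2\Omega(X_{k-1},X_j)+2\Omega(X_j,X_k)$, which does \emph{not} cancel. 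So your cancellation step is only justified once you correct or re-derive the recursion; you should state and verify the recursion yourself rather than cite the displayed formula verbatim.
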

Another involutivity theorem for PqN manifolds can be found in \cite{FMP2026}.

Before moving to the next section, we make a final comment about the family of PqN structures presented in Example \ref{ex:2}. To this end, we 
recall that 
a problem which is parallel, though independent, to the task of taming the class of involutive PqN structure, 
is the one of ``deforming" a PqN structure into another one. To phrase it in a more precise way, we recall the following theorem  \cite{FMOP2020,DMP2024}.

\begin{theorem}\label{thm:defthe} Let $(\pi,N,\phi)$ be a PqN structure on $M$ and let $\Omega$ be a closed 2-form. If 
\[
\widetilde N=N+\pi^\sharp\Omega^\flat\quad\text{and}\quad\widetilde\phi=\phi+d_N\Omega+
\frac{1}{2}[\Omega,\Omega]_{\pi},
\]
then $(\pi,\widetilde N,\widetilde\phi)$ is a PqN structure. In particular, if $\widetilde\phi=0$, 
the pair $(\pi,\widetilde N)$ is a PN structure.
\end{theorem}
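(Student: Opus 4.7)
The plan is to verify the four defining conditions of a PqN structure for the triple $(\pi,\widetilde N,\widetilde\phi)$: the compatibility $\widetilde N\pi^\sharp=\pi^\sharp\widetilde N^*$ of (C1); the vanishing of the Magri-Morosi concomitant $C(\pi,\widetilde N)$ described in Remark \ref{rem:MMcon}; the torsion identity \eqref{eq:PqNtor} with $\phi$ replaced by $\widetilde\phi$; and the closedness conditions $d\widetilde\phi=0=d_{\widetilde N}\widetilde\phi$.

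I would first dispose of the two ``algebraic'' compatibility conditions. Condition (C1) for $\widetilde N$ is formal: since $\pi$ and $\Omega$ are antisymmetric, one has $(\pi^\sharp)^*=-\pi^\sharp$ and $(\Omega^\flat)^*=-\Omega^\flat$, so $(\pi^\sharp\Omega^\flat)^*=\pi^\sharp\Omega^\flat$; combining with (C1) for $N$ gives $\widetilde N\pi^\sharp=\pi^\sharp\widetilde N^*$. For the Magri-Morosi concomitant, I would expand $C(\pi,\widetilde N)$ using the bilinearity of $C(\pi,\cdot)$ and reduce the extra piece $C(\pi,\pi^\sharp\Omega^\flat)$ to a term proportional to $d\Omega$; this term vanishes by hypothesis, leaving only $C(\pi,N)=0$, which holds since $(\pi,N,\phi)$ is PqN.

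The central part of the proof is the torsion identity. By bilinearity of the Nijenhuis construction one has a decomposition
\[
T_{\widetilde N}(X,Y)=T_N(X,Y)+T_{\pi^\sharp\Omega^\flat}(X,Y)+S_{N,\pi^\sharp\Omega^\flat}(X,Y),
\]
where $S_{N,\pi^\sharp\Omega^\flat}$ denotes the symmetric ``mixed'' torsion (the polarisation of $T$). I would then identify each summand separately. The first equals $\pi^\sharp(i_{X\wedge Y}\phi)$ by hypothesis. The second equals $\tfrac12\pi^\sharp(i_{X\wedge Y}[\Omega,\Omega]_\pi)$, a standard Koszul-type identity relying on the Jacobi identity for $\pi$ and on $d\Omega=0$. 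The third equals $\pi^\sharp(i_{X\wedge Y}d_N\Omega)$, which I expect to be the main technical step: it requires (C1) and (C2) for $(\pi,N)$, together with the definition $d_N=i_N d-d i_N$, in order to rewrite the Lie-bracket terms appearing in $S_{N,\pi^\sharp\Omega^\flat}$ as the contraction of a $d_N$-derivative. Summing the three contributions produces exactly $\pi^\sharp(i_{X\wedge Y}\widetilde\phi)$, as required.

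The closedness conditions on $\widetilde\phi$ become a bookkeeping exercise once the preceding steps are in place. The relation $d\widetilde\phi=0$ follows from $d\phi=0$, $d^2=0$, and the Schouten-type identity $d[\Omega,\Omega]_\pi=2[\Omega,d\Omega]_\pi$, all of which collapse thanks to $d\Omega=0$ and $d\phi=0$. For $d_{\widetilde N}\widetilde\phi=0$ I would split $d_{\widetilde N}=d_N+d_{\pi^\sharp\Omega^\flat}$ and use $d_N\phi=0$ together with the Cartan-type identities relating $d$, $d_N$ and the torsion; the ``anomalous'' contributions generated by $T_N$ are designed to cancel against the $[\Omega,\Omega]_\pi$ piece, precisely by virtue of the torsion identity established in the previous paragraph. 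The final assertion is then immediate: if $\widetilde\phi=0$ the torsion identity forces $T_{\widetilde N}=0$, so $(\pi,\widetilde N)$ is PN. I expect the hardest single step to be the mixed-torsion identification $S_{N,\pi^\sharp\Omega^\flat}(X,Y)=\pi^\sharp(i_{X\wedge Y}d_N\Omega)$, since it is where all the compatibility data of $(\pi,N)$ with the deforming form $\Omega$ must be packaged into a single coboundary expression.
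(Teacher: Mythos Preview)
The paper does not prove this theorem: it is merely recalled from the references \cite{FMOP2020,DMP2024} and stated without argument, so there is no ``paper's own proof'' to compare against. Your outline is essentially the direct-verification strategy used in \cite{FMOP2020}; the alternative in \cite{DMP2024} packages the same content in the language of quasi-Lie bialgebroids and Dirac structures, which streamlines the bookkeeping but is not conceptually different.

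A couple of small remarks on your sketch. First, the sentence ``$(\pi^\sharp\Omega^\flat)^*=\pi^\sharp\Omega^\flat$'' does not typecheck in the paper's conventions: $(\pi^\sharp\Omega^\flat)^*$ is an endomorphism of $T^*M$, not of $TM$. What you actually need (and what the antisymmetry of $\pi$ and $\Omega$ gives) is $(\pi^\sharp\Omega^\flat)^*=\Omega^\flat\pi^\sharp$, whence $\pi^\sharp(\pi^\sharp\Omega^\flat)^*=(\pi^\sharp\Omega^\flat)\pi^\sharp$, which is (C1). Second, you correctly flag the mixed-torsion identity $S_{N,\pi^\sharp\Omega^\flat}(X,Y)=\pi^\sharp(i_{X\wedge Y}d_N\Omega)$ as the hardest step, but your treatment of $d_{\widetilde N}\widetilde\phi=0$ is the most underspecified part of the outline: since $d_N^2\ne0$ in the quasi-Nijenhuis setting, the term $d_N(d_N\Omega)$ does not vanish on its own and must be matched against contributions from $d_{\pi^\sharp\Omega^\flat}\phi$ and the bracket pieces via the torsion identity you have just established. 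That cancellation is where the argument genuinely closes, and it deserves at least as much care as the mixed-torsion step.
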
 
This result, together with Theorem \ref{theo:inv}, opens the door to the following problem: given a PqN structure, 
which assumptions on $\Omega$ guarantee that the deformed structure $(\pi,\widetilde N,\widetilde\phi)$ is involutive? Or, equivalently, when does a PqN structure admit involutive deformations? Note that, going back to the PN and PqN structures presented in Examples \ref{ex:1} and \ref{ex:2}, one can show that the latter can be obtained as a deformation of the former using $\Omega=e^{q^n-q^1}dq^n\wedge dq^1$, see \cite{FMOP2020,FMP2023,FMP2024}.

\section{Three-dimensional oriented PqN manifolds}\label{sec:3} 

In this section we will work with oriented manifolds, i.e., with pairs $(M,\mathcal V)$ where $M$ is a manifold and $\mathcal V$ is a never vanishing differential form of maximal degree, i.e., a volume form on $M$. Any such form defines a family of isomorphisms $\star_k:\Gamma(\Lambda^k(TM))\rightarrow\Omega^{\dim M-k}(M)$ via 
\[
\star_kP:=i_P\mathcal V,
\] 
that is, the contraction of $\mathcal V$ with $P$, which is the unique $(\dim M-k)$-form such that, for all 
$Q\in\Gamma(\Lambda^{\dim M-k}(M))$,
\[
\langle i_P\mathcal V,Q\rangle=\langle\mathcal V,P\wedge Q\rangle.
\] 
Furthermore, one can associate to every $X\in\mathfrak X(M)$  its divergence, i.e., the unique smooth function $\operatorname{div}(X)$ such that $\mathcal L_X\mathcal V=\operatorname{div}(X)\mathcal V$. Once this has been settled, we can start our study of  
three-dimensional oriented PqN manifolds, stating the following very well known result.

\begin{prop}[\cite{L-GPV}, Proposition 9.20]
\label{prop:xi1} 
Let $(M,\mathcal V)$ be a three-dimensional oriented manifold. 
A bivector field $\pi\in\Gamma(\Lambda^2TM)$ is Poisson if and only if $\xi\wedge d\xi=0$, 
where $\xi=\star_2\pi=i_\pi\mathcal V$.
In this case, 
\begin{equation}
\label{prop:xi2}
\pi^\sharp\xi=0.
\end{equation}
\end{prop}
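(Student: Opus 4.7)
The plan is to reduce both claims to an elementary calculation in a coordinate chart adapted to the volume form. Choose $(x^1,x^2,x^3)$ with $\mathcal V=dx^1\wedge dx^2\wedge dx^3$ and write $\pi=a\,\partial_1\wedge\partial_2+b\,\partial_1\wedge\partial_3+c\,\partial_2\wedge\partial_3$ for smooth functions $a,b,c$. Unwinding the definition of $i_\pi\mathcal V$ one immediately obtains $\xi=c\,dx^1-b\,dx^2+a\,dx^3$.

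For the equivalence, I would next compute $\xi\wedge d\xi$ explicitly; being a 3-form on a 3-manifold, it has the form $h\,\mathcal V$ for a single function $h$ that is polynomial in $a,b,c$ and their first partials. Independently, the Schouten--Nijenhuis bracket $[\pi,\pi]$ is a 3-vector on a 3-manifold, hence also scalar-valued: it is governed by its $(1,2,3)$-component, namely the Jacobiator $\sum_{\mathrm{cyc}(i,j,k)}\pi^{i\ell}\partial_\ell\pi^{jk}$. A term-by-term comparison shows that this Jacobiator component equals $\pm h$, so $\pi$ is Poisson if and only if $\xi\wedge d\xi=0$. The role of the dimension-three hypothesis is precisely to collapse both objects to scalars, which is what allows the comparison to yield an equivalence rather than a single implication.

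For \eqref{prop:xi2} my preferred argument is coordinate-free and uses only multilinear algebra: $\pi\wedge\pi\in\Gamma(\Lambda^4 TM)$ vanishes identically on a 3-manifold. Viewing $i_\alpha$ as a graded derivation on $\Gamma(\Lambda^\bullet TM)$ and using the identity $i_\alpha\pi=\pi^\sharp\alpha$, the relation $\pi\wedge\pi\equiv 0$ gives $2(\pi^\sharp\alpha)\wedge\pi=0$ for every $\alpha\in\Omega^1(M)$. Combining this with
\[
\langle\alpha,\pi^\sharp\xi\rangle=-\langle\xi,\pi^\sharp\alpha\rangle=-\mathcal V(\pi\wedge\pi^\sharp\alpha)
\]
forces $\pi^\sharp\xi=0$; notice this step uses neither the Poisson condition nor a coordinate chart (alternatively, the coordinate formulas above yield the same conclusion by a three-line calculation in which the terms cancel pairwise by antisymmetry of $\pi^{ij}$). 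The only foreseeable obstacle throughout the proof is bookkeeping of the sign conventions entering $i_\pi\mathcal V$, $\pi^\sharp$, and the Schouten--Nijenhuis bracket so that the two scalars compared in the first step really line up to the predicted sign.
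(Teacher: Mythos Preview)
The paper does not supply a proof of this proposition: it simply cites Proposition~9.20 of \cite{L-GPV} and moves on. So there is no in-paper argument to compare your proposal against.

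That said, your outline is correct. In the coordinate chart you describe, a direct computation gives
\[
\xi\wedge d\xi=\bigl[c\,a_2+c\,b_3+b\,a_1-b\,c_3-a\,b_1-a\,c_2\bigr]\,\mathcal V,
\]
which is exactly the negative of the Jacobiator component $\sum_{\mathrm{cyc}}\pi^{i\ell}\partial_\ell\pi^{jk}$, confirming the equivalence. Your coordinate-free argument for $\pi^\sharp\xi=0$ via $\pi\wedge\pi\equiv 0$ on a three-manifold is clean and, as you correctly observe, does not require $\pi$ to be Poisson; this is consistent with how the paper uses \eqref{prop:xi2} (for instance in item~(1) of Remark~\ref{rem:2stat}, where only $N\circ\pi^\sharp=\pi^\sharp\circ N^\ast$ is assumed). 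The only caveat you yourself flag---matching sign conventions for $i_\pi\mathcal V$, $\pi^\sharp$, and the Schouten bracket---is real but routine, and with the paper's conventions ($\langle\beta,\pi^\sharp\alpha\rangle=\pi(\alpha,\beta)$ and $\langle i_P\mathcal V,Q\rangle=\langle\mathcal V,P\wedge Q\rangle$) everything lines up as you indicate.
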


\begin{remark}
\label{rem:vanish-xi-pi}
Since $\star_2$ is an isomorphism, the Poisson tensor $\pi$ never vanishes if and only if the corresponding 1-form $\xi$ never vanishes.
\end{remark}

We also recall a very nice characterization of three-dimensional oriented PN manifolds, which was presented in \cite{Cheng-Sheng}, Theorem 2.1. 

%\begin{remark} Hereafter we %will 
%limit ourself to consider the case of nowhere zero Poisson bivector fields, referring the reader to the above cited reference for an in-depth analysis of the general case.
%\end{remark}

\begin{theorem}\label{theo:charN}
Let $N$ be a $(1,1)$ tensor field on a three-dimensional oriented Poisson manifold $(M,\pi,\mathcal V)$. Suppose that $\pi$ never vanishes.
Then $(M,\pi,N)$ is a PN manifold if and only if $N$ can be decomposed as 
\begin{equation}
N=\lambda I+Z\otimes\xi,\label{eq:chPN}
\end{equation}
where $\lambda\in C^\infty(M)$, $Z\in\mathfrak X(M)$, $\xi=i_\pi\mathcal V$ and the following conditions hold:
\begin{equation}
d\left(\lambda+\left<\xi,Z\right>\right)=\operatorname{div}(Z)\xi\quad\text{and}\quad Z(\lambda)=0.
\label{eq:contor}
\end{equation}
%\begin{equation}
%\xi\wedge(d\left(\left<\xi,Z\right>+\lambda\right))=0\quad\text{and}\quad Z(\lambda)=0.\label{eq:contor}
%\end{equation}
\end{theorem}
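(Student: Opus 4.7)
The plan is to first pin down the structure of $N$ from (C1), then extract the conditions of \eqref{eq:contor} from $T_N=0$ and (C2) separately. Starting with (C1) applied to $\xi$ and using $\pi^\sharp\xi=0$ from Proposition~\ref{prop:xi1}, I obtain $\pi^\sharp N^\ast\xi=0$, hence $N^\ast\xi\in\ker\pi^\sharp=\mathbb{R}\xi$. Since $\pi$ never vanishes, $\ker\xi$ coincides with the 2-dimensional image of $\pi^\sharp$, i.e., the tangent distribution of the symplectic foliation of $\pi$. On each symplectic leaf $L$, (C1) translates into the statement that $N|_L$ is self-adjoint with respect to the induced symplectic form, and in dimension $2$ a symplectic self-adjoint operator is forced to be a scalar multiple of the identity. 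Gluing across leaves yields a smooth function $\lambda\in C^\infty(M)$ with $N|_{\ker\xi}=\lambda\,\mathrm{id}$; since $\ker\xi$ has codimension one, $(N-\lambda I)X$ depends $C^\infty(M)$-linearly only on $\xi(X)$, giving $(N-\lambda I)X=\xi(X)Z$ for a unique vector field $Z$, hence the decomposition $N=\lambda I+Z\otimes\xi$.

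Next I substitute this decomposition into \eqref{eq:torsionA} and expand. Using the Leibniz rule for the Lie bracket together with the Poisson condition $\xi\wedge d\xi=0$ (which reads $d\xi=\xi\wedge\eta$ locally and hence yields $\xi\wedge i_Zd\xi=\langle\xi,Z\rangle\,d\xi$), after the obvious cancellations I obtain
\[
T_N(X,Y)=Z(\lambda)\bigl(\xi(X)Y-\xi(Y)X\bigr)+\bigl[\xi\wedge d(\lambda+\langle\xi,Z\rangle)\bigr](X,Y)\cdot Z.
\]
Since $\xi$ is nowhere zero, $T_N=0$ forces $Z(\lambda)=0$, matching the second equation of \eqref{eq:contor}, and $d(\lambda+\langle\xi,Z\rangle)=g\,\xi$ for some $g\in C^\infty(M)$; this last equation is weaker than the first condition of \eqref{eq:contor}, which also specifies $g=\operatorname{div}(Z)$.

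To pin down $g=\operatorname{div}(Z)$ I turn to (C2). Expanding $C(\pi,N)(\alpha,\beta)$ with $N=\lambda I+Z\otimes\xi$ (using $\pi_N^\sharp=\lambda\pi^\sharp$, so that $[\alpha,\beta]_{\pi_N}=\lambda[\alpha,\beta]_\pi+\pi(\alpha,\beta)\,d\lambda$) and invoking the identity $\mathcal{L}_{X_f}\xi=\mathcal{L}_{X_f}(i_\pi\mathcal{V})=\operatorname{div}(X_f)\,\xi$ for every Hamiltonian vector field $X_f=\pi^\sharp df$ (which holds because $\mathcal{L}_{X_f}\pi=0$ by Jacobi for $\pi$), I specialize (C2) to closed pairs $\alpha=df,\beta=dh$. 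The Jacobi identity for $\{-,-\}_\pi$ kills the pure Poisson-bracket contributions, and after using $Z(\lambda)=0$ the $\xi$-coefficient of $C(\pi,N)(df,dh)=0$ reduces precisely to $g=\operatorname{div}(Z)$. The converse direction is then a verification: (C1) is immediate from $\pi^\sharp\xi=0$; $T_N=0$ follows from the displayed formula above by substituting the two hypotheses of \eqref{eq:contor}; and (C2) is obtained by running the same computation in reverse.

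The step I anticipate as the main obstacle is the (C2) analysis: the concomitant generates many similar-looking contributions involving $d\lambda$, Lie derivatives of $\xi$ along arbitrary vector fields, and Lie brackets with $Z$, and organizing them so that $\operatorname{div}(Z)$ emerges cleanly---rather than an arbitrary function $g$---will require careful use of the modular vector field of $(\pi,\mathcal{V})$ and of the identity $d(i_Z\mathcal{V})=\operatorname{div}(Z)\mathcal{V}$.
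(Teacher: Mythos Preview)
Your proposal is correct and follows essentially the same route as the paper's account in Remark~\ref{rem:2stat} (which summarizes the Cheng--Sheng proof): the decomposition $N=\lambda I+Z\otimes\xi$ from (C1) via the 2-dimensional symplectic self-adjointness argument, and the formula you derive for $T_N$, match \eqref{eq: T_N computation} exactly.

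The one organizational difference worth noting concerns precisely the step you flag as the main obstacle. You plan to first use $T_N=0$ to get $d(\lambda+\langle\xi,Z\rangle)=g\,\xi$ for some unknown $g$, and then use (C2) together with $Z(\lambda)=0$ to identify $g=\operatorname{div}(Z)$. The paper instead establishes three \emph{independent} equivalences given the decomposition: (C1) $\Leftrightarrow$ decomposition; (C2) $\Leftrightarrow$ the full equation $d(\lambda+\langle\xi,Z\rangle)=\operatorname{div}(Z)\xi$; and $T_N=0$ $\Leftrightarrow$ $\xi\wedge d(\lambda+\langle\xi,Z\rangle)=0$ together with $Z(\lambda)=0$. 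This separation is cleaner---it makes the converse direction a one-line observation, it is reused verbatim in the proof of Theorem~\ref{prop 3D PqN}, and it dissolves your anticipated difficulty: you do not need to feed $Z(\lambda)=0$ into the concomitant computation at all, since (C2) alone already forces the divergence to appear. Since $C(\pi,N)$ is tensorial, checking it on exact forms suffices, and the identity $d(i_Z\mathcal V)=\operatorname{div}(Z)\mathcal V$ is all the input you need.
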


A couple of remarks about the proof of the previous statement are in order.
\begin{remark}\label{rem:2stat}
\begin{enumerate}
\item[]
\item Under the hypotheses of the previous theorem, the condition $N\circ\pi^\sharp=\pi^\sharp\circ N^\ast$ forces the decomposition of the $(1,1)$ tensor field $N$ as $N=\lambda I+Z\otimes\xi$, where $\lambda$, $Z$ and $\xi$ are as in the statement of the theorem, see Lemma 2.4 in \cite{Cheng-Sheng}. On the other hand, 
since $\pi^\sharp\xi=0$ by (\ref{prop:xi2}), if $N=\lambda I+Z\otimes\xi$ then $N\circ\pi^\sharp=\pi^\sharp\circ N^\ast$. For this reason one has that
\[
N=\lambda I+Z\otimes\xi\iff N\circ\pi^\sharp=\pi^\sharp\circ N^\ast.
\]

Furthermore, given the above decomposition of $N$, 
\[
C(\pi,N)\equiv 0 \iff d(\lambda+\langle\xi,Z\rangle)=\operatorname{div}(Z)\xi,
\] 
see Lemma 2.6 in \cite{Cheng-Sheng}. In other words, on an oriented three-dimensional manifold, the pair $(\pi,N)$ is compatible, see \eqref{eq:pncon}, if and only if
\[
N=\lambda I+Z\otimes\xi\quad\text{and}\quad d(\lambda+\langle\xi,Z\rangle)=\operatorname{div}(Z)\xi.
\]
\item On the other hand, in Lemma 2.5 of \cite{Cheng-Sheng} it is proven that if $N$ decomposes as above, then 
\begin{equation}
T_N\equiv 0\iff 
\xi\wedge d(\lambda+\langle\xi,Z\rangle)= 0\quad\text{and}\quad Z(\lambda) = 0.\label{eq:equiv}
\end{equation}
If $\pi$ and $N$ are compatible, then the first identity in the right-hand side of the previous equivalence is automatically verified, so one obtains the proof of Theorem \ref{theo:charN}. Note that  
\eqref{eq:equiv} can be proved using the identity
\begin{equation}\label{eq: T_N computation} %%%%%%%%%%%
T_N(X,Y)=\left(\xi\wedge d\left(\lambda+\left<\xi,Z\right>\right)\right)(X,Y)Z+Z(\lambda)i_{\xi}\left(X\wedge Y\right),
\quad\forall X,Y\in\mathfrak X(M),
\end{equation}
see the proof of Lemma 2.5 in \cite{Cheng-Sheng}.
\end{enumerate}
\end{remark}

We are now ready to state the main result of this section which provides %give 
necessary and sufficient conditions for a triple $(\pi,N,\phi)$, with $\pi$ never vanishing, to be a PqN structure on an oriented three-dimensional manifold $(M,\mathcal V)$.

%\begin{prop}
\begin{theorem}\label{prop 3D PqN} %%%%%%%%%%%%
Let $(M,\pi)$ be an oriented three-dimensional manifold with volume form $\mathcal V$. 
Suppose that $\pi$ never vanishes.
Then the quadruple $(M,\pi,N,\phi)$ is a PqN manifold if and only if $N$ can be decomposed as $N=\lambda I+Z\otimes\xi$, where $\xi=i_{\pi}\mathcal V$,  $Z\in\mathfrak X(M)$, $\lambda\in C^{\infty}(M)$ and the following conditions hold:
\begin{equation}\label{eq: 3D PqN conditions}
d\left(\lambda+\left<\xi,Z\right>\right)=\operatorname{div}(Z)\xi\quad\text{and}\quad\phi=-Z(\lambda)\mathcal V.
\end{equation}
\end{theorem}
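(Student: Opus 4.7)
My plan is to deduce both implications from the three-dimensional PN analysis recalled in Remark \ref{rem:2stat}, combined with the identity \eqref{eq: T_N computation} and a single algebraic identity relating $\pi^\sharp$ and $\xi$ specific to three dimensions.

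For the forward implication, assume $(M,\pi,N,\phi)$ is a PqN manifold. The compatibility conditions C1 and C2 of \eqref{eq:pncon} are part of the definition of a PqN structure, and by Remark \ref{rem:2stat}(1) they are equivalent, in 3D with $\pi$ never vanishing, to the conjunction
\[
N=\lambda I+Z\otimes\xi,\qquad d(\lambda+\langle\xi,Z\rangle)=\operatorname{div}(Z)\xi,
\]
which already gives the decomposition of $N$ and the first equation in \eqref{eq: 3D PqN conditions}. To recover $\phi$, I would exploit \eqref{eq: T_N computation}: the first condition above forces $\xi\wedge d(\lambda+\langle\xi,Z\rangle)=\operatorname{div}(Z)\,\xi\wedge\xi=0$, so the Nijenhuis torsion reduces to $T_N(X,Y)=Z(\lambda)\,i_\xi(X\wedge Y)$. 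Since $\dim M=3$, there is a unique $f\in C^\infty(M)$ with $\phi=f\mathcal V$, hence $i_{X\wedge Y}\phi=f\,i_{X\wedge Y}\mathcal V$. Using the pointwise 3D identity
\[
\pi^\sharp(i_{X\wedge Y}\mathcal V)=-i_\xi(X\wedge Y),\qquad\forall X,Y\in\mathfrak X(M),
\]
(a short linear-algebra computation in a local oriented frame, most naturally relegated to the Appendix), the PqN equation \eqref{eq:PqNtor} becomes $(Z(\lambda)+f)\,i_\xi(X\wedge Y)=0$ for all $X,Y$. Since $\xi$ never vanishes, $i_\xi(X\wedge Y)$ can be made nonzero at every point by an appropriate choice of $X,Y$, forcing $f=-Z(\lambda)$ and thus $\phi=-Z(\lambda)\mathcal V$.

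The converse is the same chain of equivalences read backwards. Given the decomposition and both identities in \eqref{eq: 3D PqN conditions}, condition C1 is immediate from $\pi^\sharp\xi=0$ (Proposition \ref{prop:xi1}) together with the shape of $N$, and C2 follows from Remark \ref{rem:2stat}(1); the remaining PqN torsion identity $T_N(X,Y)=\pi^\sharp(i_{X\wedge Y}\phi)$ is obtained by inserting $\phi=-Z(\lambda)\mathcal V$ into the computation above. The closure conditions $d\phi=0$ and $d_N\phi=0$ come for free, since both produce 4-forms on a 3-manifold and therefore vanish. The principal obstacle is the pointwise identity $\pi^\sharp(i_{X\wedge Y}\mathcal V)=-i_\xi(X\wedge Y)$; beyond that, everything is a direct extension of the PN results of \cite{Cheng-Sheng} to the quasi-Nijenhuis setting.
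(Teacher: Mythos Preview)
Your proposal is correct and follows essentially the same route as the paper: both arguments reduce compatibility to the Cheng--Sheng characterization in Remark~\ref{rem:2stat}, simplify $T_N$ via \eqref{eq: T_N computation}, and then match $\phi=f\mathcal V$ against the torsion using the identity $\pi^\sharp(i_{X\wedge Y}\phi)=-i_{i_\pi\phi}(X\wedge Y)$ (your version is the special case $\phi=\mathcal V$, since $i_\pi\mathcal V=\xi$). The only cosmetic difference is that the paper proves this identity intrinsically via a short chain of duality pairings rather than deferring it to a local-frame check, and it concludes at the level of the 1-form $(f+Z(\lambda))\xi=0$ instead of your vector-field version; your explicit remark that $d\phi$ and $d_N\phi$ vanish for degree reasons is a welcome addition that the paper leaves implicit.
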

%\end{prop}

\begin{proof} As seen in the first item of Remark \ref{rem:2stat}, 
the pair $(\pi,N)$ is compatible if and only if
\[
N=\lambda I+Z\otimes\xi\quad\text{and}\quad d(\lambda+\langle\xi,Z\rangle)=\operatorname{div}(Z)\xi.
\]
It remains to prove that $\phi=-Z(\lambda)\mathcal V$. To this end, we first observe that \eqref{eq: T_N computation} yields 
\begin{equation*}
T_N(X,Y)=\cancel{\left(\xi\wedge \operatorname{div}(Z)\xi\right)}(X,Y)Z+Z(\lambda)i_{\xi}(X\wedge Y)
\end{equation*}
i.e.,
\begin{equation}
T_N(X,Y)=Z(\lambda)i_\xi(X\wedge Y).\label{eq: T_N 3D PqN}
\end{equation}
Now, for every $\eta\in\Omega^1(M)$, 
\begin{align*}
\left<\eta,\pi^{\sharp}(i_{X\wedge Y}\phi)\right>&=\pi\left(i_{X\wedge Y}\phi,\eta\right)=\left<\left(i_{X\wedge Y}\phi\right)\wedge \eta,\pi\right>
=\left<\left(i_Y\left(i_X\phi\right)\right)\wedge\eta,\pi\right>\\
&=
\left<i_Y\left(\left(i_X\phi\right)\wedge\eta\right)-\left(i_X\phi\right)\wedge \left(i_Y\eta\right),\pi\right>
=\left<i_Y\left(\left(i_X\phi\right)\wedge\eta\right),\pi\right>-\left<\eta,Y\right>\left<i_X\phi,\pi\right>
\\
&=\left<i_Y\left(i_X\left(\cancel{\phi\wedge\eta}\right)+\phi\wedge\left(i_X\eta\right)\right),\pi\right>-\left<\eta,Y\right>\left<i_X\phi,\pi\right>\\
&=\left<\eta,X\right>\left<i_Y\phi,\pi\right>-\left<\eta,Y\right>\left<i_{\pi}\phi,X\right>=\left<\eta,X\right>\left<i_{\pi}\phi,Y\right>-\left<\eta,Y\right>\left<i_{\pi}\phi,X\right>\\
&=\left<\eta,\left<i_{\pi}\phi,Y\right>X-\left<i_{\pi}\phi,X\right>Y\right>
=-\left<\eta,i_{i_{\pi}\phi}(X\wedge Y)\right>,
\end{align*}
so that 
\begin{equation}
\label{eq:iiphi}
\pi^{\sharp}(i_{X\wedge Y}\phi)=-i_{i_{\pi}\phi}(X\wedge Y).
\end{equation}
Thanks to \eqref{eq: T_N 3D PqN} and \eqref{eq:iiphi}, we have that $T(N)(X,Y)=\pi^{\sharp}(i_{X\wedge Y}\phi)$ 
if and only if $i_{i_{\pi}\phi}(X\wedge Y)=-i_{Z(\lambda)\xi}(X\wedge Y)$. Since $X,Y\in\mathfrak X(M)$ are arbitrary, this is equivalent to 
$$i_{\pi}\phi=-Z(\lambda)\xi.$$ But $\phi=f\mathcal V$ for some $f\in C^\infty(M)$, and 
$\xi=i_\pi\mathcal V$, so we arrive at the condition $(f+Z(\lambda))\xi=0$, which is equivalent to $f=-Z(\lambda)$ since $\xi$ never vanishes --- see Remark \ref{rem:vanish-xi-pi}.
We can therefore conclude that $T_N(X,Y)=\pi^{\sharp}(i_{X\wedge Y}\phi)$ if and only if $\phi=-Z(\lambda)\mathcal V$.
\end{proof}

\begin{remark}\label{rem:indpqn} %%%%%%%%%%%
It is worth making a comment about the independence of the PqN structure from the chosen volume form $\mathcal V$. First note that if $\mathcal V'$ is another volume form, then $\mathcal V'=f\mathcal V$ for a given never-vanishing smooth function $f$. This implies the equalities $\xi'=f\xi$, $\lambda'=\lambda$, and $Z'=\frac{1}{f}Z$, so that $d(\lambda+\left<\xi,Z\right>)=d(\lambda'+\left<\xi',Z'\right>)$ and 
${\operatorname{div}}_{\mathcal V}\left(Z\right)\xi={\operatorname{div}}_{\mathcal V'}\left(Z\right)\xi'$, see Remark 2.3 in \cite{Cheng-Sheng}. Finally, 
\[
\phi'=-Z'(\lambda)\mathcal V'=-Z(\lambda)\mathcal V=\phi,
\] 
proving that the PqN-structure is independent of the choice of the volume form.
\end{remark} 

\begin{example}\label{exa:PqN-R3} %%%%%%%%%%%
We present a family of PqN structures on $\mathbb R^3$, to be compared with the corresponding family of PN 
structures in \cite{Cheng-Sheng}, Proposition 3.1. Let $\pi=\frac{\partial}{\partial x}\wedge\frac{\partial}{\partial y}\in\Gamma(\Lambda^2 T\mathbb R^3)$ 
and let $\mathcal V=dx\wedge dy\wedge dz$ be the canonical volume form on $\mathbb R^3$. 
If $N$ is a (1,1) tensor field and $\phi$ is a 3-form, we know from %Proposition 
Theorem \ref{prop 3D PqN}
that the quadruple $(\mathbb R^3,\pi,N,\phi)$ is a PqN manifold if and only if 
$N=\lambda I+Z\otimes\xi$, where $\xi=i_{\pi}\mathcal V=dz$, and the following conditions hold:
\begin{equation*}
d\left(\lambda+\left<\xi,Z\right>\right)=\operatorname{div}(Z)\xi\quad\text{and}\quad\phi=-Z(\lambda)\mathcal V.
\end{equation*}
If 
\begin{equation}
\label{eq: Z}
Z=a\frac{\partial}{\partial x}+b\frac{\partial}{\partial y}+c\frac{\partial}{\partial z},
\end{equation}
then 
\begin{align}
d(\left<\xi,Z\right>+\lambda)=
\operatorname{div}(Z)
\xi\ \Longleftrightarrow\ (c+\lambda)_xdx+(c+\lambda)_ydy+(c+\lambda)_zdz
=(a_x+b_y+c_z)dz,\label{eq:equico}
\end{align}
which is equivalent to
\begin{align*}
\begin{cases}
(c+\lambda)_x=0\\
(c+\lambda)_y=0\\
\lambda_z=a_x+b_y.
\end{cases}
\end{align*}
Thus $c+\lambda$ is a function of $z$ only, say, $g(z)$. 
Since $Z(\lambda)=a\lambda_x+b\lambda_y+c\lambda_z$, one has that
\begin{equation}
\phi=-Z(\lambda)\mathcal V=(ac_x+bc_y-c(a_x+b_y))dx\wedge dy\wedge dz.\label{eq:phi}
\end{equation}
Hence we found a recipe to complete $\pi=\frac{\partial}{\partial x}\wedge\frac{\partial}{\partial y}$ 
to a PqN structure on $\mathbb R^3$ endowed with the volume form $\mathcal V=dx\wedge dy\wedge dz$. 
One has to choose arbitrary functions $\lambda,%\,
a\in C^{\infty}(\mathbb{R}^3)$ and $g(z)$. Then, after defining
\[
b=\int({\lambda}_z-a_x)dy\quad\text{and}\quad c=g-\lambda,
\]
one constructs the vector field (\ref{eq: Z}) and the PqN structure $(\mathbb{R}^3,\pi,N,\phi)$, where $N=\lambda I+Z\otimes dz$ and $\phi$ is given by \eqref{eq:phi}.
\end{example}

%Let  $(\pi,N,\phi)$ be a PqN structure. To notice whether such PqN structure is a deformation obtained from  PN structure, in the sense of Theorem \ref{thm:defthe}, we must %determine if there exists
%verify the existence of  a closed 2-form $\Omega$ %verifying:
%such that
%\begin{itemize}
%\item the (1,1) tensor field 
%\begin{equation}\label{eq: N_1}
%N_1:=N-\pi^{\sharp}\Omega^{\flat}
%\end{equation} 
%is a Nijenhuis operator compatible with $\pi$;
%\item  $\Omega$ and $\phi$ are related by
%\begin{equation}
%\phi=d_{N_1}\Omega+\frac{1}{2}\left[\Omega,\Omega\right]_{\pi}.\label{eq:phi}
%\end{equation}
%\end{itemize} 
%Under these conditions the pair $(\pi,N_1)$ is a PN manifold and its deformation by $\Omega$ is the PqN structure $(\pi,N,\phi)$. 
%The following proposition shows that  
% this can be always done for an 
A first application of Theorem \ref{prop 3D PqN} focuses on the problem of deformed PqN structures in the sense of Theorem \ref{thm:defthe}. 
%every oriented three-dimensional PqN structure with a never vanishing Poisson bivector can be obtained, locally, as a deformation of a PN manifold, see Theorem \ref{thm:defthe}.
\begin{prop}\label{prop:local}
Locally, every PqN structure on an oriented three-dimensional manifold with a never vanishing Poisson bivector is obtained by deforming a PN structure by means of a closed 2-form. 
\end{prop}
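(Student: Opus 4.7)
The strategy is to apply Theorem \ref{thm:defthe} in reverse: produce, locally, an explicit PN structure and a closed 2-form whose deformation yields $(\pi,N,\phi)$. Since $\pi$ never vanishes it has constant rank $2$, so by the Weinstein splitting theorem one can choose local coordinates $(x,y,z)$ in which $\pi=\partial_x\wedge\partial_y$; by Remark \ref{rem:indpqn} we may take $\mathcal V=dx\wedge dy\wedge dz$, so that $\xi=i_\pi\mathcal V=dz$. By Theorem \ref{prop 3D PqN}, in the explicit form of Example \ref{exa:PqN-R3}, one writes $N=\lambda I+Z\otimes dz$ with $Z=a\partial_x+b\partial_y+c\partial_z$, where $c+\lambda=g(z)$ depends only on $z$, $\lambda_z=a_x+b_y$, and $\phi=-Z(\lambda)\,\mathcal V$.

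The ansatz I propose is
\[
N_0:=g(z)\,\partial_z\otimes dz,\qquad
\Omega:=-\lambda\,dx\wedge dy+a\,dy\wedge dz+b\,dz\wedge dx.
\]
Three verifications are then required. First, $(\pi,N_0)$ is a PN structure: it decomposes as $\lambda_0 I+Z_0\otimes dz$ with $\lambda_0=0$ and $Z_0=g(z)\partial_z$, and the two conditions of Theorem \ref{theo:charN} reduce to $g'(z)\,dz=\operatorname{div}(Z_0)\,\xi$ and $Z_0(\lambda_0)=0$, both immediate. Second, $d\Omega=(-\lambda_z+a_x+b_y)\,dx\wedge dy\wedge dz=0$, which is precisely the PqN compatibility. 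Third, a short computation using $\pi^\sharp dx=\partial_y$, $\pi^\sharp dy=-\partial_x$, $\pi^\sharp dz=0$ gives
\[
\pi^\sharp\Omega^\flat=\lambda(\partial_x\otimes dx+\partial_y\otimes dy)+(a\partial_x+b\partial_y)\otimes dz,
\]
so that $N_0+\pi^\sharp\Omega^\flat=N$ after using $\lambda+c=g(z)$.

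Applying Theorem \ref{thm:defthe} to the PN structure $(\pi,N_0,0)$ and the closed 2-form $\Omega$, the triple $(\pi,N,\widetilde\phi)$ with $\widetilde\phi:=d_{N_0}\Omega+\tfrac12[\Omega,\Omega]_\pi$ is a PqN structure. To identify $\widetilde\phi$ with $\phi$ I appeal to a uniqueness argument already implicit in the proof of Theorem \ref{prop 3D PqN}: any 3-form $\chi$ satisfying $T_N(X,Y)=\pi^\sharp(i_{X\wedge Y}\chi)$ must obey $i_\pi\chi=-Z(\lambda)\,\xi$, which, on writing $\chi=f\mathcal V$ and using $\xi\neq 0$, forces $f=-Z(\lambda)$; hence $\widetilde\phi=\phi$. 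The only genuinely non-mechanical step is guessing the ansatz: since $g=\lambda+\langle\xi,Z\rangle$ is constant on the symplectic leaves by the PqN compatibility, and since the image of $\Omega\mapsto\pi^\sharp\Omega^\flat$ has no $\partial_z\otimes dz$-component in this local frame, the \emph{leaf-transverse} part of $N$ has to live in $N_0$, while the leafwise data of $N$ are encoded in $\Omega$, whose closedness then follows for free from the same PqN compatibility.
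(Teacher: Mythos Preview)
Your proof is correct and follows the same route as the paper: the same Darboux chart, the same PN tensor $N_0=g(z)\,\partial_z\otimes dz$ (called $N_1$ there), and the same closed 2-form $\Omega$ (written there as $-\lambda\,dx\wedge dy-b\,dx\wedge dz+a\,dy\wedge dz$). The one difference is the final step: the paper verifies $d_{N_0}\Omega+\tfrac12[\Omega,\Omega]_\pi=-Z(\lambda)\,\mathcal V$ by an explicit computation relegated to Appendix~\ref{subsec:app1}, whereas you observe that once Theorem~\ref{thm:defthe} guarantees $(\pi,N,\widetilde\phi)$ is PqN, the uniqueness of $\phi$ established in the proof of Theorem~\ref{prop 3D PqN} (the condition $T_N(X,Y)=\pi^\sharp(i_{X\wedge Y}\chi)$ together with $\xi\neq 0$ forces $\chi=-Z(\lambda)\,\mathcal V$) immediately gives $\widetilde\phi=\phi$. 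This shortcut is a genuine simplification and saves the appendix computation.
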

\begin{proof}
Let $(\pi,N,\phi)$ be a PqN structure, with $\pi$ never vanishing, on an oriented three-dimensional manifold  $\left(M,\mathcal V\right)$ and let $m\in M$. Darboux theorem entails that there exists a coordinate neighborhood $U$ of  $m$ with coordinates, say $(x,y,z)$, such that $\pi=\frac{\partial}{\partial x}\wedge\frac{\partial}{\partial y}$ on $U$. %The independence of the PqN structure with respect to the volume form, see Remark \ref{rem:indpqn}, allows us to consider $\mathcal V=dx\wedge dy\wedge dz$ on $U$. 
Moreover, one can consider $\mathcal V=dx\wedge dy\wedge dz$ on $U$, see Remark \ref{rem:indpqn}. Then, on $U$, Example \ref{exa:PqN-R3} shows that $N=\lambda I+Z\otimes \xi$ where $\xi=dz$ and $Z$ is given by \eqref{eq: Z} verifying $\lambda+c=g(z)$ and $\lambda_z=a_x+b_y$.  Now consider
\[
\Omega=\Omega_{12}dx\wedge dy+\Omega_{13}dx\wedge dz+\Omega_{23}dy\wedge dz
=-\lambda\, dx\wedge dy-b\,dx\wedge dz+a\,dy\wedge dz,
\]
and note that $d\Omega=0$ since $\lambda_z=a_x+b_y$. Using $\lambda+c=g(z)$, %we compute the $(1,1)$ tensor  field $N_1:=N-\pi^{\sharp}\Omega^{\flat}$: 
one can compute
\begin{align*}
\pi^{\sharp}\Omega^{\flat}%&=-\Omega_{12}\frac{\partial}{\partial x}\otimes dx-\Omega_{12}\frac{\partial}{\partial y}\otimes dy+\Omega_{23}\frac{\partial}{\partial x}\otimes dz-\Omega_{13}\frac{\partial}{\partial y}\otimes dz\\
&=-\Omega_{12} I+\left(\Omega_{23}\frac{\partial}{\partial x}-\Omega_{13}\frac{\partial}{\partial y}+\Omega_{12}\frac{\partial}{\partial z}\right)\otimes dz\\
&=\lambda I+\left(a\frac{\partial}{\partial x}+b\frac{\partial}{\partial y}-(g(z)-c)\frac{\partial}{\partial z}\right)\otimes dz\\
&=\lambda I+%(a\partial_x+b\partial_y+c\partial_z)
Z\otimes dz-g(z)\frac{\partial}{\partial z}\otimes dz.
\end{align*}
%yielding the identity
%\begin{equation}
%g(z)\frac{\partial}{\partial z}\otimes dz+\pi^{\sharp}\Omega^{\flat}=N.\label{eq:identN}
%\end{equation}
If $\widetilde{Z}:=g(z)\frac{\partial}{\partial z}$ and $N_1:=\widetilde{Z}\otimes dz$, 
then %one can write \eqref{eq:identN} in the form 
$N=N_1+\pi^{\sharp}\Omega^{\flat}$.
%\begin{align*}
%N_1&=(\lambda I+Z\otimes dz)-\left(-\Omega_{12}\frac{\partial}{\partial x}\otimes dx-\Omega_{12}\frac{\partial}{\partial y}\otimes dy+\Omega_{23}\frac{\partial}{\partial x}\otimes dz-\Omega_{13}\frac{\partial}{\partial y}\otimes dz\right)\\
%&=(\lambda I+Z\otimes dz)+\Omega_{12} I-\left(\Omega_{23}\frac{\partial}{\partial x}-\Omega_{13}\frac{\partial}{\partial y}+\Omega_{12}\frac{\partial}{\partial z}\right)\otimes dz\\
%&=Z\otimes dz-\left(a\frac{\partial}{\partial x}+b\frac{\partial}{\partial y}-(g(z)-c)\frac{\partial}{\partial z}\right)\otimes dz=g(z)\frac{\partial}{\partial z}\otimes dz.
%\end{align*}
%If we let $\widetilde{Z}:=g(z)\frac{\partial}{\partial z}$ and $N_1:=\widetilde{Z}\otimes dz$ then $N=N_1+\pi^{\sharp}\Omega^{\flat}$. 
One can observe that $N_1$ is a $(1,1)$ tensor field compatible with $\pi=\frac{\partial}{\partial x}\wedge\frac{\partial}{\partial y}$, since $N_1\circ\pi^{\sharp}=\pi^{\sharp}\circ N_1^{*}$ and
\[
d\langle \xi,\widetilde{Z}\rangle=d(g(z))=g'(z)dz=\operatorname{div}(\widetilde{Z})\xi,
\]
see Remark \ref{rem:2stat}. Moreover, \eqref{eq:equiv} shows that $T_{N_1}=0$, proving that $(\pi,N_1)$ is a PN structure. 
To conclude that the original PqN structure is a deformation of this PN structure, we are left to show 
\begin{equation}
d_{N_1}\Omega+\frac{1}{2}\left[\Omega,\Omega\right]_{\pi}=\phi=-Z(\lambda)\mathcal V,\label{eq:dNphi}
\end{equation}
whose computational details are enclosed in Section \ref{subsec:app1}.
\end{proof}

Another significant application of Theorem \ref{prop 3D PqN}  is its use in the following
%\begin{theorem}
\begin{prop}\label{thm:inv3}
Every three-dimensional oriented PqN manifold with a never vanishing Poisson tensor is involutive.
\end{prop}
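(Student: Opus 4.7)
The plan is to compute the functions $I_k=\tfrac{1}{2k}\operatorname{Tr}(N^k)$ explicitly using the normal form $N=\lambda I+Z\otimes\xi$ supplied by Theorem \ref{prop 3D PqN}, and then to verify $\{I_j,I_k\}=0$ by a direct calculation that exploits the two key identities $\pi^\sharp\xi=0$ (from Proposition \ref{prop:xi1}) and $d(\lambda+\langle\xi,Z\rangle)=\operatorname{div}(Z)\xi$.

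First, I would exploit the rank-one structure of $Z\otimes\xi$: since $(Z\otimes\xi)^2=\langle\xi,Z\rangle\,Z\otimes\xi$, an induction on $k$ gives
$$N^k=\lambda^k I+\frac{\mu^k-\lambda^k}{\mu-\lambda}\,Z\otimes\xi,\qquad \mu:=\lambda+\langle\xi,Z\rangle,$$
where the quotient is to be interpreted as $\sum_{i=0}^{k-1}\mu^i\lambda^{k-1-i}$, hence globally smooth even where $\mu=\lambda$. Taking traces yields $\operatorname{Tr}(N^k)=2\lambda^k+\mu^k$ and therefore
$$I_k=\frac{\lambda^k}{k}+\frac{\mu^k}{2k},\qquad dI_k=\lambda^{k-1}d\lambda+\tfrac{1}{2}\mu^{k-1}d\mu.$$

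The crucial observation is that the compatibility condition $d\mu=\operatorname{div}(Z)\xi$, combined with $\pi^\sharp\xi=0$, gives $\pi^\sharp d\mu=0$, and so
$$X_k:=\pi^\sharp dI_k=\lambda^{k-1}\pi^\sharp d\lambda=\lambda^{k-1}X_\lambda,$$
i.e., every Hamiltonian vector field $X_k$ is a scalar multiple of the single vector field $X_\lambda:=\pi^\sharp d\lambda$. Consequently,
$$\{I_j,I_k\}=\langle dI_k,X_j\rangle=\lambda^{j-1}\bigl(\lambda^{k-1}\langle d\lambda,X_\lambda\rangle+\tfrac{1}{2}\mu^{k-1}\langle d\mu,X_\lambda\rangle\bigr).$$
The first summand in the parentheses is $\pi(d\lambda,d\lambda)=0$ by antisymmetry of $\pi$, while the second equals $\operatorname{div}(Z)\langle\xi,X_\lambda\rangle=-\operatorname{div}(Z)\langle d\lambda,\pi^\sharp\xi\rangle=0$. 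Hence $\{I_j,I_k\}=0$ for all $j,k\geq 1$.

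I do not foresee any serious obstacle: the 3-form $\phi$ plays no direct role in the argument beyond fixing the normal form of $N$ through Theorem \ref{prop 3D PqN}. Alternatively, one could verify the hypotheses of Theorem \ref{theo:inv}: since $\langle\xi,X_\lambda\rangle=0$ implies $NX_1=\lambda X_1$, and hence $N^{k-1}X_1=\lambda^{k-1}X_1=X_k$, the vectors $Y_k$ appearing there all vanish, trivialising its condition (b); but the direct proof above is shorter and sidesteps any existence question for the auxiliary $2$-form $\Omega$ required in condition (a).
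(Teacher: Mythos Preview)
Your argument is correct. The formula $N^k=\lambda^kI+\dfrac{\mu^k-\lambda^k}{\mu-\lambda}\,Z\otimes\xi$ is the same as the paper's \eqref{eq:Nalk} written in closed form, the trace computation $\operatorname{Tr}(N^k)=2\lambda^k+\mu^k$ is right, and the two vanishing steps at the end both reduce to $\pi^\sharp\xi=0$ together with the antisymmetry of $\pi$.

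Your route, however, differs from the paper's. The paper does not compute $I_k$ explicitly; instead it proves a lemma (Lemma~\ref{lem:phik}) showing that the obstruction $1$-forms $\phi_k$ in the generalized recursion \eqref{eq:genrec} satisfy $\phi_k=Z(\lambda)\lambda^k\xi$, so that $\pi^\sharp\phi_k=0$. Plugging this into the identity \eqref{eq:recursionphi} kills its right-hand side and leaves $\{I_k,I_j\}=\{I_{k-1},I_{j+1}\}$, whence involutivity follows by telescoping. Your approach bypasses the $\phi_k$'s and the recursion entirely: by writing $dI_k=\lambda^{k-1}d\lambda+\tfrac12\mu^{k-1}d\mu$ and observing $\pi^\sharp d\mu=0$, you see at once that all the Hamiltonian vector fields $X_k$ are proportional to a single $X_\lambda$, which makes $\{I_j,I_k\}=0$ immediate. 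Your proof is shorter and more self-contained; the paper's proof has the merit of locating precisely where the PqN obstruction to involutivity sits and why it is harmless here. Your final paragraph on the alternative via Theorem~\ref{theo:inv} (the vanishing of the $Y_k$) coincides with the paper's Remark~\ref{rem:FMP2024}.
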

\begin{proof}
Using local Darboux coordinates $(x,y,z)$ and the notation as in Example \ref{exa:PqN-R3}, one has
\begin{equation}
N=\begin{bmatrix}\lambda(x,y,z) & 0 & a(x,y,z)\\ 0 & \lambda(x,y,z) & b(x,y,z) \\ 0 & 0 & g(z)\end{bmatrix}.\label{eq:thanksRef}
\end{equation}
This implies that $\text{Tr} N^k=2\lambda^k+g^k$ for every $k\geq 1$. Since $\pi=\frac{\partial}{\partial x}\wedge\frac{\partial}{\partial y}$, 
one has that 
\[
\{I_i,I_j\}=\pi(dI_i,dI_j)=0
\]
for all $i,j$.\end{proof}

For an alternative proof of this proposition which does not use local coordinates, we refer the reader to Subsection \ref{ss:proofth}.

\section{PqN and Haantjes structures}\label{sec:4}

We start this section recalling the definition of Haantjes torsion of a tensor field of type $(1,1)$, then we introduce the notion of Haantjes manifold and we discuss how this relates to the geometry of PqN manifolds. 

Let $N$ be a (1,1) tensor field. Recall that its Nijenhuis torsion $T_N$ is the tensor field of type $(1,2)$, i.e., the 2-form with values in $TM$, defined in \eqref{eq:torsionA}. To address the problem of the Frobenius integrability of 2-dimensional planes generated by the eigenvectors of $N$ (see Subsection \ref{ss:haantjes} 
and the references therein), 
Haantjes introduced in \cite{Haantjes} what is now known as the \emph{Haantjes torsion} of $N$. This is the tensor field 
$H_N$ of type $(1,2)$ defined by the formula
\begin{equation}
H_N(X,Y)=T_N(NX,NY)-N(T_N(NX,Y)+T_N(X,NY)-NT_N(X,Y)),\qquad
X,Y\in\mathfrak X(M), \label{eq:haant}
\end{equation}
where $T_N$ is the Nijenhuis torsion of $N$. The tensor $N$ will be called a Haantjes operator if $H_N\equiv 0$.

In \cite{BogoTens}, Bogoyavlenskij derived algebraic identities for Nijenhuis and Haantjes torsions on an arbitrary manifold. some of these results, which will be used subsequently, are collected in the following
\begin{lem}[\cite{BogoTens}, Corollaries 3, 4]\label{lem:Bogo}
Let  $N$ be a (1,1) tensor field on a %$n$-dimensional 
manifold $M$ and let $p(x,t)=\sum_{m=0}^ka_m(x)t^m$ be a polynomial with coefficients in $C^\infty(M)$.  Then: %We have three statements
\begin{enumerate}
\item If $N$ is a Haantjes operator, $p(x,N)$ is also a Haantjes operator.
\item For all $f,\,g\in C^{\infty}(M)$
\begin{equation}
H_{fI+gN}(X,Y)=g^4H_{N}(X,Y).\label{eq: Haantjes_defor_I}
\end{equation}
%where $I$ is the identity operator on $\mathfrak{X}(M)$.
\end{enumerate}
\end{lem}
The next example presents a formula for the Haantjes torsion of %a (1,1) tensor field which is 
the tensor product of a vector field and a 1-form.
\begin{example} 
Let $N=W\otimes\eta$, where $W\in\mathfrak{X}(M)$ and $\eta\in\Omega^{1}(M)$. Then for all $X,Y\in\mathfrak{X}(M)$
\begin{equation}
T_{N}(X,Y)=\left[\left<\eta,X\right>Y(\left<\eta,W\right>)-\left<\eta,Y\right>X(\left<\eta,W\right>)-(\eta\wedge d\eta)(W,X,Y)\right]W,\label{eq: NtorsionWxn}
\end{equation}
see Lemma 2.5 in \cite{Cheng-Sheng}.
Moreover a computation using \eqref{eq: NtorsionWxn} shows that
\begin{align*}
T_N(NX,NY)&=0,\\
NT_N(NX,Y)&=\left<\eta,W\right>\left[\left<\eta,X\right>\left<\eta,W\right>Y(\left<\eta,W\right>)-\left<\eta,Y\right>\left<\eta,X\right>W(\left<\eta,W\right>)\right]W,\\
NT_N(X,NY)&=\left<\eta,W\right>\left[\left<\eta,X\right>\left<\eta,Y\right>W(\left<\eta,W\right>)-\left<\eta,Y\right>\left<\eta,W\right>X(\left<\eta,W\right>)\right]W,\\
N^2T_N(X,Y)&=\left<\eta,W\right>^2\left[(\left<\eta,X\right>Y(\left<\eta,W\right>)-\left<\eta,Y\right>X(\left<\eta,W\right>)-(\eta\wedge d\eta)(W,X,Y)\right]W.
\end{align*}
Summing up the above four equations, \eqref{eq:haant} yields
\begin{equation}\label{eq: HtorsionWxn}
H_{N}(X,Y)=-\left<\eta,W\right>^2(\eta\wedge d\eta)(W,X,Y)W.
\end{equation}
\end{example}

As we already observed, the presence of a PN or, more generally, of a bi-Hamiltonian structure on $M$, makes it possible to introduce a class of distinguished vector fields (called bi-Hamiltonian), which possess a large number of conserved quantities which are mutually in involution with respect to the underlying Poisson structures. In particular, in the PN case, the presence of the recursion operator defines a natural space where to look for the Lenard-Magri chains. In the attempt to weaken the hypothesis on the recursion operator to be torsion free, but still keeping the possibility to define a notion of Lenard-Magri chain, Magri introduced in \cite{MagriHaa} the notion of Haantjes manifold, which we recall in %here below

\begin{defi} A Haantjes structure on a manifold $M$ is a pair $(N,\theta)$ of a $(1,1)$ tensor field $N$ 
and a 1-form $\theta$ such that:
\begin{itemize}
\item[(H1)] $H_N=0$;
\item[(H2)] $d\theta=0$;
\item[(H3)] $d_N\theta=0$;
\item[(H4)] $\langle\theta,T_N(X,Y)\rangle=0$ for all $X,Y\in\mathfrak X(M)$.
\end{itemize}
A manifold endowed with a Haantjes structure is called a Haantjes manifold. If $\theta=0$, the corresponding Haantjes structures is called trivial.
\end{defi}

We can now state and prove the main result of this section. 
\begin{theorem}
\label{theo:haan} 
If $(\pi,N,\phi)$ is a PqN structure on an oriented three-dimensional manifold $(M,\mathcal V)$ and $\pi$ never vanishes, then  $M$ admits a Haantjes structure $(N,\theta)$. 
\end{theorem}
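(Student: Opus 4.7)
The strategy is to exploit the explicit description of $N$ provided by Theorem \ref{prop 3D PqN}, namely $N = \lambda I + Z\otimes \xi$ with $\xi = i_\pi\mathcal{V}$, subject to the compatibility relation $d(\lambda + \langle \xi,Z\rangle) = \operatorname{div}(Z)\,\xi$. The proof then splits into two parts: first, verify that $N$ itself is a Haantjes operator (condition (H1)); second, exhibit a concrete 1-form $\theta$ satisfying (H2)--(H4).

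For (H1), the natural tool is Bogoyavlenskij's identity \eqref{eq: Haantjes_defor_I} of Lemma \ref{lem:Bogo}, which with $f=\lambda$ and $g=1$ gives $H_N = H_{Z\otimes \xi}$. The explicit formula \eqref{eq: HtorsionWxn} then yields $H_{Z\otimes \xi}(X,Y) = -\langle \xi,Z\rangle^{2}\,(\xi\wedge d\xi)(Z,X,Y)\,Z$, which vanishes because $\xi\wedge d\xi = 0$ by Proposition \ref{prop:xi1}. Hence $N$ is a Haantjes operator.

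The 1-form suggested by the compatibility condition itself is
\[
\theta := d(\lambda + \langle \xi,Z\rangle) = \operatorname{div}(Z)\,\xi,
\]
which is simultaneously exact and a scalar multiple of $\xi$. Condition (H2) is immediate because $\theta$ is exact. Condition (H4) follows from \eqref{eq: T_N 3D PqN}: writing $T_N(X,Y) = Z(\lambda)(\langle \xi,X\rangle Y - \langle \xi,Y\rangle X)$, one has $\langle \xi, T_N(X,Y)\rangle \equiv 0$, so $\langle \theta, T_N(X,Y)\rangle = \operatorname{div}(Z)\,\langle \xi, T_N(X,Y)\rangle = 0$. For (H3), set $g := \lambda + \langle \xi,Z\rangle$, so that $\theta = dg$, and note that a direct computation gives $N^{\ast}\xi = g\,\xi$. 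Consequently $i_N\theta = N^{\ast}\theta = \operatorname{div}(Z)\,g\,\xi = g\,dg = \tfrac{1}{2}\,d(g^{2})$ is exact, so $d(i_N\theta)=0$ and therefore $d_N\theta = i_N\,d\theta - d(i_N\theta) = 0$.

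I expect no substantial obstacle once Theorem \ref{prop 3D PqN} is in hand; the essential insight is that the compatibility relation of the PqN structure itself produces an exact 1-form proportional to $\xi$, and this single 1-form simultaneously handles (H2), (H3) and (H4). The fourth-power factor in Bogoyavlenskij's identity \eqref{eq: Haantjes_defor_I}, which permits the reduction of the Haantjes torsion of $N$ to that of the rank-one operator $Z\otimes\xi$, is the only place where the three-dimensional setting and the special form of $N$ play together in an essential way.
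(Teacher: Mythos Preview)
Your proposal is correct and follows essentially the same approach as the paper: the same choice $\theta=\operatorname{div}(Z)\,\xi=d(\lambda+\langle\xi,Z\rangle)$, the same reduction of $H_N$ to $H_{Z\otimes\xi}$ via \eqref{eq: Haantjes_defor_I} and \eqref{eq: HtorsionWxn}, and the same use of $\langle\xi,T_N(X,Y)\rangle=0$ for (H4). The only cosmetic difference is in (H3): you observe $i_N\theta=g\,dg=\tfrac12\,d(g^2)$ is exact, whereas the paper expands $d(g\theta)=dg\wedge\theta$ and notes this is a multiple of $\xi\wedge\xi=0$; both amount to the same computation.
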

\begin{proof} 
We know from Theorem \ref{prop 3D PqN} that 
$N=\lambda I+Z\otimes\xi$, where $\xi=i_{\pi}\mathcal{V}$, $\lambda\in C^{\infty}(M)$ and $Z\in\mathfrak X(M)$ are related by the first equation of \eqref{eq: 3D PqN conditions}. We claim that the pair $(N,\theta)$, where 
\begin{equation}\label{eq: 1-form theta}
\theta=\text{div}(Z)\xi,
\end{equation}
is a Haantjes structure on $M$. 
The exactness of $\theta$, see \eqref{eq: 3D PqN conditions}, gives us (H2). Now observe that, for every $X\in\mathfrak X(M)$,
\[
i_N\theta(X)=\langle\theta,NX\rangle=\langle\theta,\lambda X+\langle\xi,X\rangle Z\rangle=\lambda\langle\theta,X\rangle+\langle\theta,Z\rangle\langle\xi,X\rangle=(\lambda+\langle\xi,Z\rangle)\langle\theta,X\rangle,
\]
which yields the identity
\begin{equation}
i_N\theta=(\lambda+\langle\xi,Z\rangle)\theta.\label{eq:iNxi}
\end{equation}
Since $d\theta=0$, one has that
\[
d_N\theta=-d(i_N\theta)\stackrel{\eqref{eq:iNxi}}{=}-d((\lambda+\langle\xi,Z\rangle)\theta)=-d(\lambda+\langle\xi,Z\rangle)\wedge\text{div}(Z)\xi\stackrel{\eqref{eq: 3D PqN conditions}}{=}-\left(\text{div}(Z)\right)^2(\xi\wedge\xi)=0,
\]
proving (H3). Moreover, 
\[
\langle\theta,T_N(X,Y)\rangle\stackrel{\eqref{eq: T_N 3D PqN}}{=}Z(\lambda)\text{div}(Z)\langle\xi,i_\xi(X\wedge Y)\rangle=Z(\lambda)\text{div}(Z)\langle\xi,\langle\xi,X\rangle Y-\langle\xi,Y\rangle X\rangle=0
\]
for all $X,Y\in\mathfrak X(M)$, proving (H4). We are left to prove (H1). 
Applying \eqref{eq: Haantjes_defor_I} to $N=\lambda I+Z\otimes\xi$ and using \eqref{eq: HtorsionWxn}, we obtain
\[
H_N(X,Y)=H_{Z\otimes\xi}(X,Y)=-\left<\xi,Z\right>^2\left(\xi\wedge d\xi\right)(Z,X,Y)Z,
\]
which vanishes identically, see Proposition \ref{prop:xi1}.
\end{proof}

\begin{remark}
\begin{enumerate}
\item[]
\item In the same hypotheses of the previous theorem, Remark \ref{rem:vanish-xi-pi} implies that 
the Haantjes structure $(N,\theta)$ is non-trivial if the divergence of the vector field $Z$ is not zero.
\item For a general PqN structure $(\pi,N,\phi)$, the Haantjes torsion of $N$ does not vanish. An example is given by the case of the periodic Toda lattice, see \eqref{PqN-Toda}.
\end{enumerate}
\end{remark}

\begin{remark}
\label{rem:Eber}
A Haantjes structure $(N,\theta)$ on an oriented three-dimensional manifold cannot always be derived from a PqN structure $(N,\pi,\phi)$, since a non-trivial Poisson bivector $\pi$ such that  $N\circ\pi^{\sharp}=\pi^{\sharp}\circ N^*$ does not always exist. Indeed, consider 
(see Example 2 in \cite{MagriVeselov}) the (1,1) tensor field
\[
N=-\frac{y}{2}\frac{\partial}{\partial x}\otimes dz+2\frac{\partial}{\partial y}\otimes dx-z\frac{\partial}{\partial y}\otimes dz+2\frac{\partial}{\partial z}\otimes dy.
\]
It can be checked that its Haantjes torsion vanishes and that its Nijenhuis torsion
\[
T_N=\frac{\partial}{\partial x}\otimes dz\otimes dx-\frac{\partial}{\partial x}\otimes dx\otimes dz+\frac{\partial}{\partial y}\otimes dz\otimes dy-\frac{\partial}{\partial y}\otimes dy\otimes dz
\]
satisfies $\left<dz,T_N(X,Y)\right>=0$ for all $X,Y\in \mathfrak{X}(\mathbb R^3)$. Moreover,  $d_N(dz)=-d(i_Ndz)=0$ since $i_Ndz=2dy$. 
Therefore the pair $(N,dz)$ is a Haantjes structure on $\mathbb R^3$. 
However, a simple computation shows that a non-trivial Poisson bivector $\pi$ such that 
$\pi^{\sharp}\circ N^*=N\circ \pi^{\sharp}$ does not exist.
%\begin{equation}
%\begin{aligned}
%&\begin{pmatrix}
%0 & a & b \\ -a & 0 & c \\ -b & -c &0
%\end{pmatrix}
%\begin{pmatrix}
%0 & 2 & 0 \\ 0 & 0 & 2 \\ -\frac{y}{2} & -z & 0
%\end{pmatrix}
%=
%\begin{pmatrix}
%0 & 0 & -\frac{y}{2} \\ 2 & 0 & -z \\ 0 & 2 & 0
%\end{pmatrix}
%\begin{pmatrix}
%0 & a & b \\ -a & 0 & c \\ -b & -c & 0
%\end{pmatrix}\\
%&\Rightarrow
%\begin{pmatrix}
%-\frac{by}{2} & -bz & 2a \\ -\frac{cy}{2} & -2a-cz & 0 \\ 0 & -2b & -2c 
%\end{pmatrix}
%=
%\begin{pmatrix}
%\frac{by}{2} & \frac{cy}{2} & 0 \\ bz & 2a+cz & 2b \\ -2a & 0 & 2c
%\end{pmatrix}
%\Rightarrow a=b=c=0,
%\end{aligned}
%\end{equation}
This entails that the Haantjes structure $(N,dz)$ cannot be derived from a PqN structure.
\end{remark}

\subsection{Generalized Lenard-Magri chains}

Haantjes manifolds were introduced by Magri in \cite{MagriHaa} with the aim of generalizing the notion of Lenard-Magri chain, see Definition \ref{def:gen-LM} below. In this more general geometrical setting, the sequence of Nijenhuis torsionless tensors 
$\{N^i\}%_{i\geq 0}
$, appearing in the PN case, is traded for a family of $(1,1)$ tensors with vanishing Haantjes torsion. More precisely,

\begin{defi}[Generalized Lenard-Magri chains]\label{def:genchain} 
\label{def:gen-LM}
A generalized Lenard-Magri chain of length $p$ on a Haantjes manifold $M$ with Haantjes structure $(N,\theta)$ is a family  
$\mathcal N=\{N_0,N_1,\dots,N_{p-1}\}%_{i\geq 0}^{p-1}
$ of $(1,1)$ tensor fields satisfying, for all $i,j= 0,\dots,p-1$:
\begin{itemize}
\item[(C0)] $N_0=I$ and $N_1=N$;
\item[(C1)] $H_{N_i}=0$; 
\item[(C2)] $N_i\circ N_j=N_j\circ N_i$; 
\item[(C3)] $d_{N_i}\theta=0$; 
\item[(C4)] $\langle\theta,T_{N_i}(X,Y)\rangle=0$ 
for all $X,Y\in\mathfrak X(M)$. 
\end{itemize}
\end{defi}

The relevance of these conditions relies on the remarkable observation that, on a Haantjes manifold endowed with a generalized Lenard-Magri chain, the 1-forms $\theta_{ij}=N_iN_j\theta$ are closed --- see Proposition 1 in \cite{MagriHaa}.

\begin{remark}
The name generalized Lenard-Magri chain is not universally adopted. In particular, in \cite{MagriHaa} the structure introduced in Definition \ref{def:genchain} is simply called a Lenard chain. 
\end{remark} 

\begin{example}
Considering the same (1,1) tensor field $N$ as in Remark \ref{rem:Eber}, we notice that
\[
N^2=4\frac{\partial}{\partial z}\otimes dx-\left(y\frac{\partial}{\partial x}+2z\frac{\partial}{\partial y}\right)\otimes dy-\left(y\frac{\partial}{\partial y}+2z\frac{\partial}{\partial z}\right)\otimes dz
\]
has Nijenhuis torsion
\[
T_{N^2}=-8\frac{\partial}{\partial y}\otimes dx\otimes dy+8\frac{\partial}{\partial y}\otimes dy\otimes dx-8\frac{\partial}{\partial z}\otimes dx\otimes dz+8\frac{\partial}{\partial z}\otimes dz\otimes dx.
\]
Thus $\mathcal N=\{I,N,N^2\}$ is not a generalized Lenard-Magri chain since $\left<dz,T_{N^2}(X,Y)\right>$ does not vanish for all $X,Y\in\mathfrak{X}(\mathbb R^3)$.
\end{example}

Let $(N,\theta)$ be the Haantjes structure, with $\theta$ given by \eqref{eq: 1-form theta}, 
derived from a PqN-structure $(\pi,N,\phi)$ on an oriented three-dimensional manifold $M$ 
with volume form $\mathcal{V}$, see Theorem \ref{theo:haan}. 
\begin{theorem}
\label{theo:Len-Ma-Ge} 
The family $\mathcal N=\{N^0,N^1,\dots,N^{p-1}\}$
is a generalized Lenard-Magri chain associated to the Haantjes structure $(N,\theta)$.
\end{theorem}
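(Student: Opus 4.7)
The plan is to verify each of the conditions (C0)--(C4) of Definition \ref{def:gen-LM} for the family $\mathcal N=\{N^0,N^1,\dots,N^{p-1}\}$ with respect to the Haantjes structure $(N,\theta)$ produced by Theorem \ref{theo:haan}. Conditions (C0) and (C2) are immediate: the former by construction, the latter because powers of a single endomorphism commute. For (C1), I would invoke item 1 of Lemma \ref{lem:Bogo}: since Theorem \ref{theo:haan} has already established $H_N=0$, every polynomial in $N$ is Haantjes, hence in particular each $N^k$.

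For (C4) I would start from the explicit expression for the Nijenhuis torsion of $N^k$ given in Remark \ref{rem:torsion-of-k-power}, namely $T_{N^k}(X,Y)=f_k\,Z(\lambda^k)\,i_\xi(X\wedge Y)$. Pairing with $\theta=\operatorname{div}(Z)\xi$ yields
\[
\langle\theta,T_{N^k}(X,Y)\rangle=f_k\,Z(\lambda^k)\,\operatorname{div}(Z)\,\bigl(\langle\xi,X\rangle\langle\xi,Y\rangle-\langle\xi,Y\rangle\langle\xi,X\rangle\bigr)=0,
\]
so (C4) is satisfied on all of $M$ without any additional hypothesis.

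The main computation concerns (C3). Starting from (\ref{eq:Nalk}), for any 1-form $\eta$ one checks that $i_{N^k}\eta=\lambda^k\eta+f_k\langle\eta,Z\rangle\xi$. Applying this to $\theta=\operatorname{div}(Z)\xi$ and using the binomial identity
\[
\lambda^k+f_k\langle\xi,Z\rangle=\sum_{l=0}^{k}\binom{k}{l}\lambda^l\langle\xi,Z\rangle^{k-l}=(\lambda+\langle\xi,Z\rangle)^k,
\]
I obtain $i_{N^k}\theta=(\lambda+\langle\xi,Z\rangle)^k\,\theta$. Since $\theta$ is closed (Theorem \ref{theo:haan}), it follows that
\[
d_{N^k}\theta=-d\bigl(i_{N^k}\theta\bigr)=-k(\lambda+\langle\xi,Z\rangle)^{k-1}\,d(\lambda+\langle\xi,Z\rangle)\wedge\theta.
\]
By the PqN condition (\ref{eq: 3D PqN conditions}), $d(\lambda+\langle\xi,Z\rangle)=\operatorname{div}(Z)\,\xi$ is itself a multiple of $\xi$, so the right-hand side contains $\xi\wedge\xi=0$, giving (C3).

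The only step that requires a little care is the binomial collapse $\lambda^k+f_k\langle\xi,Z\rangle=(\lambda+\langle\xi,Z\rangle)^k$, which is the key that reduces $i_{N^k}\theta$ to a scalar multiple of $\theta$; once this is in hand, (C3) follows from the PqN conditions in exactly the same way that (H3) was obtained in the proof of Theorem \ref{theo:haan}. I do not anticipate any genuine obstacle beyond this short bookkeeping, since all structural ingredients --- the Haantjes property of $N^k$, the commutativity of the family, and the explicit form of $T_{N^k}$ --- have already been recorded earlier.
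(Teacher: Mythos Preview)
Your proof is correct and follows essentially the same route as the paper. The only minor difference is in establishing the key identity $i_{N^k}\theta=(\lambda+\langle\xi,Z\rangle)^k\theta$: the paper derives it recursively from the $k=1$ case via $\langle i_{N^k}\theta,X\rangle=\langle i_N\theta,N^{k-1}X\rangle=(\lambda+\langle\xi,Z\rangle)\langle i_{N^{k-1}}\theta,X\rangle$, whereas you compute it directly from the closed form \eqref{eq:Nalk} together with the binomial collapse $\lambda^k+f_k\langle\xi,Z\rangle=(\lambda+\langle\xi,Z\rangle)^k$. Both derivations are equally short, and the remaining steps for (C1), (C3), (C4) are identical to the paper's.
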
 
\begin{proof}
To prove the statement, one needs to check that conditions (C1), (C3) and (C4) are satisfied. To this end, first observe that Lemma \ref{lem:Bogo}-1 entails that  $H_{N^k}=0$ for every $k\geq 2$, since $H_N=0$.  
As far as (C3) is concerned, 
we notice that, for every $X\in\mathfrak X(M)$,
\[
\langle i_{N^k}\theta,X\rangle=\langle\theta,N^kX\rangle=\langle i_{N}\theta,N^{k-1}X\rangle\stackrel{\eqref{eq:iNxi}}{=}(\lambda+\langle\xi,Z\rangle)\langle i_{N^{k-1}}\theta,X\rangle,
\]
which entails
\begin{equation}
i_{N^k}\theta=(\lambda+\langle\xi,Z\rangle)^k\theta.\label{eq:iNxi1}
\end{equation}
Since $\theta$ is closed 
\[
d_{N^k}\theta=-di_{N^k}\theta=-d(\lambda+\langle\xi,Z\rangle)^k\theta=-k(\lambda+\langle\xi,Z\rangle)^{k-1}d(\lambda+\langle\xi,Z\rangle)\wedge\theta\stackrel{\eqref{eq: 3D PqN conditions}}{=}0.
\]
To prove that (C4) holds, we simply observe that $\left<\theta,T_{N^k}(X,Y)\right>$ vanishes because of (\ref{eq: T_N 3D PqN-k}) and the definition of $\theta$.
\end{proof}

\subsection{Comparison with the work of Tempesta and Tondo}\label{ss:tempton}
We will now make a few comments to link the theory presented in the previous sections with the recent works of Tempesta and Tondo, see \cite{tempestatondoben, tempestatondo, tempestatondohigher,tempestatondoclass} and \cite{TondoLag}. We start recalling %that  
the notion of Haantjes algebra, %was 
introduced in \cite{tempestatondo}, %see 
Definition 36. %in loc. cit.. 
This consists of a set of $(1,1)$ tensor fields $\{N_i\}_{i\in I}$ on a manifold $M$ 
%, having zero Haantjes torsion and 
satisfying the following two properties:
\begin{itemize}
\item $H_{fN_i+gN_j}=0$, for all $i,j\in I$ and $f,g\in C^\infty(M)$;
\item $H_{N_iN_j}=0=H_{N_jN_i}$, for all $i,j\in I$.
\end{itemize}
%In particular, $H_{N_i}=0$ for all $i\in I$.  \blu{[We should change the following sentence.]}
%Note that the first property says that the $N_i$ %s 
%generate a $C^\infty(M)$-module and that the two properties, if taken together, say that the $N_i$ %s 
%generate a ring.

A Haantjes algebra will be called of rank $m$ if the corresponding $C^\infty(M)$-module has rank equal to $m$. The Haantjes algebra will be called abelian if the operators $N_i$ %s 
pairwise commute. An important class of abelian Haantjes algebras are the cyclic ones, i.e., the Haantjes algebras generated by a given Haantjes operator. A cyclic Haantjes algebra has rank $m$ if the minimal polynomial of its generator has degree $m$. 

After these preliminary comments, we easily obtain %\blu{[using Theorem \ref{theo:Len-Ma-Ge} ? Why rank-two?] }
\begin{prop}\label{pro:TTpqn}
If $(\pi,N,\phi)$ is a PqN structure on an oriented three-dimensional manifold and $\pi$ never vanishes, 
then $N$ generates a rank-two %, 
cyclic Haantjes algebra.
\end{prop}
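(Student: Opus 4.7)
The plan is to read off both the rank and the Haantjes axioms directly from the decomposition $N=\lambda I+Z\otimes\xi$ furnished by Theorem~\ref{prop 3D PqN}, combined with Lemma~\ref{lem:Bogo} and the vanishing $H_N=0$ already established in the proof of Theorem~\ref{theo:haan}.

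First I would pin down the structure of the cyclic algebra $\mathcal A_N$ generated by $N$. From \eqref{eq:Nalk} we have $N^k=\lambda^k I+f_k\,Z\otimes\xi$ for explicit $f_k\in C^\infty(M)$, so every power of $N$ lies in the $C^\infty(M)$-module spanned by $\{I,Z\otimes\xi\}=\{I,N\}$ (using $Z\otimes\xi=N-\lambda I$). A direct computation, using $(Z\otimes\xi)^2=\langle\xi,Z\rangle\,Z\otimes\xi$, gives
\[
N^2=\lambda^2 I+(2\lambda+\langle\xi,Z\rangle)\,Z\otimes\xi=(2\lambda+\langle\xi,Z\rangle)N-\lambda(\lambda+\langle\xi,Z\rangle)I,
\]
so $N$ satisfies a degree-two polynomial with coefficients in $C^\infty(M)$, and the minimal polynomial has degree exactly $2$ away from the degenerate locus where $Z\otimes\xi\equiv 0$ (in which case $N=\lambda I$ and the algebra collapses to $C^\infty(M)\cdot I$). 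Hence $\mathcal A_N$ is the free module $\{fI+gN\mid f,g\in C^\infty(M)\}$, which has rank two.

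Next I would verify the two axioms of a Haantjes algebra. Any two elements of $\mathcal A_N$ have the form $N_i=a_iI+b_iN$ with $a_i,b_i\in C^\infty(M)$, and any $C^\infty(M)$-linear combination $fN_i+gN_j=(fa_i+ga_j)I+(fb_i+gb_j)N$ still has this shape. By Lemma~\ref{lem:Bogo}(2),
\[
H_{fN_i+gN_j}=H_{(fa_i+ga_j)I+(fb_i+gb_j)N}=(fb_i+gb_j)^4\,H_N,
\]
which vanishes because $H_N=0$ by Theorem~\ref{theo:haan}. The product $N_iN_j$ lies in $\mathcal A_N$ by the closure established in the first step, so it too has the form $fI+gN$, and the same Lemma~\ref{lem:Bogo}(2) gives $H_{N_iN_j}=g^4H_N=0$.

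I do not expect a real obstacle here: the argument is essentially bookkeeping on top of the explicit form of $N$ and its powers. The only mild subtlety is the rank statement itself, where one must exclude the trivial open locus on which $N$ reduces to $\lambda I$ so as to guarantee rank exactly two rather than merely rank at most two; this is the reason the statement is phrased generically.
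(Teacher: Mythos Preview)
Your proof is correct and follows essentially the same route as the paper: both compute the degree-two minimal polynomial of $N=\lambda I+Z\otimes\xi$ to get the rank, and both invoke Lemma~\ref{lem:Bogo} together with $H_N=0$ from Theorem~\ref{theo:haan} to verify the Haantjes conditions. The only cosmetic difference is that the paper cites part~1 of Lemma~\ref{lem:Bogo} (any polynomial in a Haantjes operator is Haantjes) to handle the whole cyclic algebra at once, whereas you first reduce every element to the form $fI+gN$ via the rank-two structure and then apply part~2; your extra care about the degenerate locus $Z\otimes\xi\equiv 0$ is a point the paper leaves implicit.
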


\begin{proof}
As we already noticed in Theorem \ref{theo:haan}, $N$ is a Haantjes operator.  Then $\lbrace N^k\rbrace_{k\geq 0}$ is a cyclic Haantjes algebra as a consequence of Lemma \ref{lem:Bogo}-1. Moreover, the rank of such algebra is two since the minimal polynomial of $N=\lambda I+Z\otimes\xi$ is  $p(t)=t^2-(\left<\xi,Z\right>+2\lambda)t+\lambda^2+\left<\xi,Z\right>\lambda$.
\end{proof}

Given a Haantjes algebra of rank $m$ and a set $N_1,\dots,N_m$ of its generators (as a $C^\infty(M)$-module), a closed 1-form $\alpha$ generates a so called \emph{Haantjes chain} $(N_1^\ast\alpha,\dots,N_m^\ast\alpha)$ of length $m$ if
%, for all $k=1,\dots,m$,
$
d_{N_k}\alpha=0,
$
i.e., if $N^\ast_k\alpha=i_{N_k}\alpha$ is closed for all $k=1,\dots,m$, and the forms $N_k^\ast\alpha$, with $k=1,\dots,m$, are linearly independent, see Definition 15 in \cite{tempestatondoclass}.

If $(\pi,N,\phi)$ is as in Proposition \ref{pro:TTpqn}, where $N=\lambda I+Z\otimes\xi$ 
and $\theta$ given as in \eqref{eq: 1-form theta},
%$d\xi=0$
then the 1-forms 
%$\xi_k={N^k}^\ast\xi=i_{N^k}\xi$, 
$\theta_k={N^k}^{\ast}\theta=i_{N^k}\theta$, 
see \eqref{eq:iNxi1}, are closed but, clearly, they are not independent (since %$\xi$
$\theta$ is a common eigenvector of the ${N^k}^\ast$) and for this reason they fail to form a Haantjes chain.

%On the other hand, under the same assumptions on the PqN structure made in Proposition \ref{pro:TTpqn}, every closed 1-form $\alpha$ such that 
%\begin{enumerate}
%\item $\xi$ and $\alpha$ are linearly independent,
%\item $\langle\alpha,Z\rangle$ is not identically zero,
%\end{enumerate}
%generates a Haantjes chain on the dense open set where $\langle\alpha,Z\rangle\neq 0$.
Finally, it is worth mentioning that every PqN structure satisfying the hypothesis of Proposition \ref{pro:TTpqn} is a \emph{Poisson-Haantjes structure} in the sense of Definition 6 in \cite{TondoLag}.

\section{Loose ends}\label{sec:Appendix}

We divide this section in three parts. The first one is devoted to the proof of \eqref{eq:dNphi},
%\textcolor{blue}{\eqref{eq:phi}},
the second one to an alternative proof of 
Proposition \ref{thm:inv3}  and the last one is aimed to collect some more information about the Haantjes torsion.

\subsection{Proof of the identity \eqref{eq:dNphi}}\label{subsec:app1}
First notice that, given $\pi=\frac{\partial}{\partial x}\wedge\frac{\partial}{\partial y}$  and any closed 2-form 
$\Omega=\Omega_{12}dx\wedge dy+\Omega_{13}dx\wedge dz+\Omega_{23}dy\wedge dz$, we have
\[
\frac{1}{2}\left[\Omega,\Omega\right]_{\pi}=\left(\Omega_{23}\frac{\partial\Omega_{12}}{\partial x}-\Omega_{13}\frac{\partial\Omega_{12}}{\partial y}+\Omega_{12}\frac{\partial\Omega_{12}}{\partial z}\right)\mathcal V.
\]
In addition, for $N_1=\widetilde{Z}\otimes dz$ as considered in %Example \ref{exa: PqN-R3 deformed}
Proposition \ref{prop:local}, one has that
\begin{equation}\label{eq: d_N1Omega} %%%%%%%
d_{N_1}\Omega=-di_{N_1}\Omega=-d\left(i_{N_1}(\Omega_{12}dx\wedge dy)+i_{N_1}(\Omega_{13}dx\wedge dz)+i_{N_1}(\Omega_{23}dy\wedge dz)\right).
\end{equation}
%Since $\Omega=\Omega_{ij}dx_i\wedge dx_j$ is closed, then $d_{N_1}\Omega=-di_{N_1}\Omega$.
%\[
%d_{N_1}\Omega=(i_{N_1}\circ d-d\circ i_{N_1})\Omega=-di_{N_1}\Omega.
%\]
%Notice that
%\[
%i_{N_1}\Omega=i_{N_1}(\Omega_{12}dx\wedge dy)+i_{N_1}(\Omega_{13}dx\wedge dz)+i_{N_1}(\Omega_{23}dy\wedge dz).
%\]
%Computing each of the summands of the previous sum we have
We observe that
\begin{align*}
i_{N_1}(\Omega_{12}dx\wedge dy)(X,Y)%&=(\Omega_{12}dx\wedge dy)(N_1X,Y)+(\Omega_{12}dx\wedge dy)(X,N_1Y)\\
 &=(\Omega_{12}dx\wedge dy)\left(g(z)dz(X)\frac{\partial}{\partial z},Y\right)+(\Omega_{12}dx\wedge dy)\left(X,g(z)dz(Y)\frac{\partial}{\partial z}\right)=0,
\end{align*}
\begin{align*}
i_{N_1}(\Omega_{13}dx\wedge dz)(X,Y)%&=(\Omega_{13}dx\wedge dz)(N_1X,Y)+(\Omega_{13}dx\wedge dz)(X,N_1Y)\\
&=(\Omega_{13}dx\wedge dz)\left(g(z)dz(X)\frac{\partial}{\partial z},Y\right)+(\Omega_{13}dx\wedge dz)\left(X,g(z)dz(Y)\frac{\partial}{\partial z}\right)\\
%&=\cancel{(\Omega_{13}dx)(g(z)dz(X)\partial_z)}dz(Y)-(\Omega_{13}dx)(Y)dz(g(z)dz(X)\partial_z)+\\
%\;\;\;\;&+(\Omega_{13}dx)(X)dz(g(z)dz(Y)\partial_z)-\cancel{(\Omega_{13}dx)(g(z)dz(Y)\partial_z)}dz(X)\\
&=g(z)\Omega_{13}dx(X)dz(Y)-g(z)\Omega_{13}dx(Y)dz(X)\\
&=g(z)(\Omega_{13}dx\wedge dz)(X,Y),\\
%\end{align*}
%and, finally,
%\begin{align*}
i_{N_1}(\Omega_{23}dy\wedge dz)(X,Y)%&=(\Omega_{23}dy\wedge dz)(N_1X,Y)+(\Omega_{23}dy\wedge dz)(X,N_1Y)\\
&=(\Omega_{23}dy\wedge dz)\left(g(z)dz(X)\frac{\partial}{\partial z},Y\right)+(\Omega_{23}dy\wedge dz)\left(X,g(z)dz(Y)\frac{\partial}{\partial z}\right)\\
%&=\cancel{(\Omega_{23}dy)(g(z)dz(X)\partial_z)}dz(Y)-(\Omega_{23}dy)(Y)dz(g(z)dz(X)\partial_z)+\\
%\;\;\;\;&+(\Omega_{23}dy)(X)dz(g(z)dz(Y)\partial_z)-\cancel{(\Omega_{23}dy)(g(z)dz(Y)\partial_z)}dz(X)\\
&=g(z)\Omega_{23}dy(X)dz(Y)-g(z)\Omega_{23}dy(Y)dz(X)\\
&=g(z)(\Omega_{23}dy\wedge dz)(X,Y).
\end{align*}
%Therefore
%Using the three previous equations and put them in \eqref{eq: d_N1Omega} gives us
By substituting the three preceding equations into \eqref{eq: d_N1Omega}, we obtain
%\begin{align*}
%i_{N_1}\Omega &=i_{N_1}(\Omega_{12}dx\wedge dy)+i_{N_1}(\Omega_{13}dx\wedge dz)+i_{N_1}(\Omega_{23}dy\wedge dz)\\
%&=g(z)(\Omega_{13}dx\wedge dz+\Omega_{23}dy\wedge dz)\\
%&=g(z)\Omega-g(z)\Omega_{12}dx\wedge dy.
%\end{align*}
%and
%Then we have
\begin{align*}
d(i_{N_1}\Omega)&=d(g(z)\Omega-g(z)\Omega_{12}dx\wedge dy)=dg(z)\wedge \Omega-\frac{\partial \left(g(z)\Omega_{12}\right)}{\partial z}\mathcal V\\
&=g(z)'dz\wedge (\Omega_{12}dx\wedge dy+\Omega_{13}dx\wedge dz+\Omega_{23}dy\wedge dz)-\left(g(z)'\Omega_{12}+g(z)\frac{\partial \Omega_{12}}{\partial z}\right)\mathcal V\\
%&=g(z)'\Omega_{12}\mathcal V-g(z)'\Omega_{12}\mathcal V-g(z)(\Omega_{12})_z\mathcal V\\
&=-g(z)\frac{\partial \Omega_{12}}{\partial z}\mathcal V.
\end{align*}
Thus if $\Omega$ is a closed 2-form on $\mathbb{R}^3$, then 
%\[
%d_{N_1}\Omega=-d(i_{N_1}\Omega)=g(z)%\cdot
%(\Omega_{12})_z\mathcal V.
%\]
%Since
%Finally we have
\begin{equation}\label{eq: deformed_part}
d_{N_1}\Omega+\frac{1}{2}\left[\Omega,\Omega\right]_{\pi}=\left(-g(z)\frac{\partial \Omega_{12}}{\partial z}+\Omega_{23}\frac{\partial\Omega_{12}}{\partial x}-\Omega_{13}\frac{\partial\Omega_{12}}{\partial y}+\Omega_{12}\frac{\partial\Omega_{12}}{\partial z}\right)\mathcal V.
\end{equation}
In particular, the 2-form $\Omega$ given in %Example \ref{exa: PqN-R3 deformed} 
Proposition \ref{prop:local} is closed and from \eqref{eq: deformed_part} we derive 
\[
d_{N_1}\Omega+\frac{1}{2}\left[\Omega,\Omega\right]_{\pi}=(g(z)\lambda_z-a\lambda_x-b\lambda_y+\lambda\lambda_z)\mathcal V=-Z(\lambda)\mathcal V=\phi,
\]
since $\Omega_{12}=-\lambda,\Omega_{13}=-b,\Omega_{23}=a$ and $\lambda+c=g(z)$.

%\begin{eqnarray}%\label{eq: particular [Omega,Omega]_{pi}}
%\frac{1}{2}\left[\Omega,\Omega\right]_{\pi}%&=& \left(a(-\lambda)_x+b(-\lambda)_y-\lambda(-\lambda)_z\right)\mathcal V
%\nonumber\\
%&=&-(a\lambda_x+b\lambda_y-(g(z)-c)\lambda_z)\mathcal V=-Z(\lambda)\mathcal V+g(z)\lambda_z\mathcal V.\label{eq: particular [Omega,Omega]_{pi}}  
%\end{eqnarray}

\subsection{An alternative proof of Proposition \ref{thm:inv3}}\label{ss:proofth} In this subsection we present an alternative proof of Proposition \ref{thm:inv3} which does not use local coordinates. In spite of being more involved, it is interesting per-se and we think it could play a role in a possible extension of the results of this paper to higher (greater than 3) dimension.

The proof is based on a closed formula for the 1-forms $\phi_k$, see \eqref{eq:genrec}, which is contained in the following
\begin{lem}\label{lem:phik}
	On an oriented three-dimensional PqN manifold, where $N=\lambda I+Z\otimes\xi$, we have that 
	\begin{equation}
		\phi_k=Z(\lambda)\lambda^k\xi\qquad\mbox{for all $k\geq 0$.}
		\label{eq:phik}
	\end{equation}
\end{lem}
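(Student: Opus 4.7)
The plan is to use the explicit formula \eqref{eq: T_N 3D PqN} for $T_N$ to rewrite $i_X T_N$ as a concrete $(1,1)$ tensor field, then compute $N^s \circ (i_X T_N)$ and take traces. Specifically, since $i_\xi(X \wedge Y) = \langle \xi, X\rangle Y - \langle \xi, Y\rangle X$, equation \eqref{eq: T_N 3D PqN} gives
\[
(i_X T_N)(Y) = T_N(X,Y) = Z(\lambda)\bigl(\langle \xi, X\rangle Y - \langle \xi, Y\rangle X\bigr),
\]
so as a $(1,1)$ tensor
\[
i_X T_N = Z(\lambda)\bigl(\langle \xi, X\rangle I - X \otimes \xi\bigr).
\]

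The second step is to compute the powers of $N = \lambda I + Z \otimes \xi$. Using the composition rule $(Z \otimes \xi)\circ(V \otimes \eta) = \langle \xi, V\rangle\,(V\text{ replaced by }Z)\otimes \eta$, an easy induction gives
\[
N^s = \lambda^s I + a_s\,(Z \otimes \xi)
\]
for some coefficient $a_s \in C^\infty(M)$ (which we will not need to identify, as it will disappear in the next step).

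The third and key step is to compose and observe a cancellation. Expanding
\[
N^s \circ (i_X T_N) = (\lambda^s I + a_s Z\otimes\xi)\circ Z(\lambda)\bigl(\langle \xi, X\rangle I - X \otimes \xi\bigr),
\]
and using $(Z\otimes\xi)\circ I = Z\otimes\xi$ together with $(Z\otimes\xi)\circ (X\otimes\xi) = \langle \xi, X\rangle\,Z \otimes \xi$, the two contributions proportional to $a_s\,(Z\otimes\xi)$ are $+a_s Z(\lambda)\langle\xi,X\rangle\,(Z\otimes\xi)$ and $-a_s Z(\lambda)\langle\xi,X\rangle\,(Z\otimes\xi)$, and so cancel exactly. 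This leaves
\[
N^s \circ (i_X T_N) = \lambda^s Z(\lambda)\bigl(\langle \xi, X\rangle I - X \otimes \xi\bigr).
\]

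Finally, since $\dim M = 3$ we have $\operatorname{Tr}(I) = 3$ and $\operatorname{Tr}(X\otimes\xi) = \langle \xi, X\rangle$, so
\[
\operatorname{Tr}\bigl(N^s(i_X T_N)\bigr) = 3\lambda^s Z(\lambda)\langle \xi, X\rangle - \lambda^s Z(\lambda)\langle \xi, X\rangle = 2\lambda^s Z(\lambda)\langle \xi, X\rangle.
\]
Substituting into the definition \eqref{eq:phis} of $\phi_s$ yields $\langle \phi_s, X\rangle = \lambda^s Z(\lambda)\langle \xi, X\rangle$ for every $X \in \mathfrak X(M)$, which is precisely \eqref{eq:phik}. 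The only nontrivial point in the argument is the cancellation of the $a_s\,(Z\otimes\xi)$ terms; the rest is direct computation once the form of $i_X T_N$ and of $N^s$ are in hand.
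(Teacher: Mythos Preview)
Your proof is correct and follows essentially the same approach as the paper: both compute $i_X T_N = Z(\lambda)(\langle\xi,X\rangle I - X\otimes\xi)$, write $N^s = \lambda^s I + a_s\,Z\otimes\xi$, and observe that the $a_s$ contribution cancels before taking the trace. Your presentation is in fact slightly cleaner, since you notice the cancellation already at the operator level (in $N^s\circ(i_X T_N)$) rather than after expanding and applying the trace, and you correctly observe that the explicit form of $a_s$ is irrelevant for this lemma.
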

\begin{proof}
	The first step is to prove %(see Section \ref{subsec:app2}) 
	that 
	\begin{equation}
		N^k=\lambda^kI+f_k \,Z\otimes\xi,\label{eq:Nalk}
	\end{equation}
	where $f_k=\sum_{l=0}^{k-1}{k\choose l}\lambda^l\langle\xi,Z\rangle^{k-l-1}$. 
The proof is done by induction, noticing that for $k=1$ one has $N=\lambda I+Z\otimes \xi$, which is the content of Theorem \ref{theo:charN}.
Suppose now that \eqref{eq:Nalk} holds for $k$ and let us prove it for $k+1$. We compute
\begin{equation}
\label{eq:s1}
\begin{aligned}
N^{k+1}(X)&=(\lambda I+Z\otimes\xi)\Big(\lambda^kX+\sum_{l=0}^{k-1}{k\choose l}\lambda^l\langle\xi,Z\rangle^{k-l-1}\langle\xi,X\rangle Z\Big)
%\nonumber
\\
&=\lambda^{k+1}X+\underbrace{\sum_{l=0}^{k-1}{k\choose l}\lambda^{l+1}\langle\xi,Z\rangle^{k-l-1}\langle\xi,X\rangle Z}_{(1)}+\underbrace{\sum_{l=0}^{k-1}{k\choose l}\lambda^{l}\langle\xi,Z\rangle^{k-l}\langle\xi,X\rangle Z+\lambda^k\langle\xi,X\rangle Z}_{(2)}.
\end{aligned}
\end{equation}
Now we note that in \eqref{eq:s1} one has
\begin{eqnarray}
&(1)&\stackrel{s=l+1}{=}\sum_{s=1}^{k}{k\choose s-1}\lambda^{s}\langle\xi,Z\rangle^{k-s}\langle\xi,X\rangle Z,\label{eq:s2}\\
&(2)&=\sum_{l=0}^k{k\choose l}\lambda^l\langle\xi,Z\rangle^{k-l}\langle\xi,X\rangle Z=\sum_{l=1}^k{k\choose l}\lambda^l\langle\xi,Z\rangle^{k-l}\langle\xi,X\rangle Z+\langle\xi,Z\rangle^k\langle\xi,Z\rangle Z.\label{eq:s3}
\end{eqnarray}
By renaming $l$ the running variable $s$ in \eqref{eq:s2}, plugging \eqref{eq:s2} and \eqref{eq:s3} in \eqref{eq:s1}, 
and using the identity ${k\choose l-1}+{k\choose l}={k+1\choose l}$, one obtains 
\begin{eqnarray*}
%&&
N^{k+1}(X)%\\
&=&\lambda^{k+1}X+\sum_{l=1}^{k}\Big[{k\choose l-1}+{k\choose l}\Big]\lambda^{l}\langle\xi,Z\rangle^{k-l}\langle\xi,X\rangle Z+\langle\xi,Z\rangle^k\langle\xi,Z\rangle Z\\
&%\stackrel{{k\choose l-1}+{k\choose l}={k+1\choose l}}{=}
=&\lambda^{k+1}X+\sum_{l=1}^{k}{k+1\choose l}\lambda^{l}\langle\xi,Z\rangle^{k-l}\langle\xi,X\rangle Z+\langle\xi,Z\rangle^k\langle\xi,Z\rangle Z\\
&=&\lambda^{k+1}X+\sum_{l=0}^{k}{k+1\choose l}\lambda^{l}\langle\xi,Z\rangle^{k-l}\langle\xi,X\rangle Z,
\end{eqnarray*}
which, being $X$ arbitrary, entails the identity \eqref{eq:Nalk}.  

The second step starts with the remark that, since
	\[
	T_N(X,Y)\stackrel{\eqref{eq: T_N 3D PqN}}{=}Z(\lambda)i_\xi(X\wedge Y)=Z(\lambda)\langle\xi,X\rangle Y-Z(\lambda)\langle\xi,Y\rangle X,
	\]
	one has that
	\begin{equation}
		i_XT_N=Z(\lambda)(\langle\xi,X\rangle I-X\otimes\xi).\label{eq:iXTN}
	\end{equation}
	For every $k\geq 0$ and $X\in\mathfrak X(M)$, we can now compute
	\[
	\begin{split}
		\langle\phi_k,X\rangle&\stackrel{%\eqref{eq:phi}
			\eqref{eq:phis}}{=}\frac12\text{Tr}(N^ki_XT_N)\stackrel{\eqref{eq:iXTN}}
		{=}\frac12\text{Tr}\big(N^k(Z(\lambda)\langle\xi,X\rangle I-Z(\lambda)X\otimes\xi)\big)\\
		&\stackrel{\eqref{eq:Nalk}}{=}\frac12\text{Tr}\Big[\Big(\lambda^kI+\sum_{l=0}^{k-1}{k\choose l}\lambda^l\langle\xi,Z\rangle^{k-l-1}Z\otimes\xi \Big)\Big(Z(\lambda)\langle\xi,X\rangle I-Z(\lambda)X\otimes\xi)\Big)\Big]\\
		&=\frac12\text{Tr}\Big[Z(\lambda)\lambda^k\langle\xi,X\rangle I-Z(\lambda)\lambda^kX\otimes\xi+Z(\lambda)\langle\xi,X\rangle\sum_{l=0}^{k-1}{k\choose l}\lambda^l\langle\xi,Z\rangle^{k-l-1}Z\otimes\xi\\
		&-Z(\lambda)\sum_{l=1}^{k-1}{k\choose l}\lambda^l\langle\xi,Z\rangle^{k-l-1}\langle\xi,X\rangle Z\otimes\xi\Big],
	\end{split}
	\]
	where, in the last sum, $\langle\xi,X\rangle Z\otimes\xi$ comes from the  computation
	\[
	\left((Z\otimes\xi)\circ (X\otimes\xi)\right)(Y)=\langle\xi,Y\rangle(Z\otimes\xi)(X)=\langle\xi,Y\rangle\langle\xi,X\rangle Z%,
	\qquad\forall Y\in\mathfrak X(M),
	\]
	which yields the identity $(Z\otimes\xi)\circ(X\otimes\xi)=\langle\xi,X\rangle Z\otimes\xi$. Now, recalling that $\text{Tr}(Z\otimes\xi)=\langle\xi,Z\rangle$ and $\text{Tr}(I)=3$, the last term of the previous chain of identities becomes
	\[
	\begin{split}
		&\frac12\left(3Z(\lambda)\lambda^k\langle\xi,X\rangle-\lambda^kZ(\lambda)\langle\xi,X\rangle+\cancel{Z(\lambda)\langle\xi,X\rangle\sum_{l=0}^{k-1}{k\choose l}\lambda^l\langle\xi,Z\rangle^{k-l}}\cancel{-Z(\lambda)\langle\xi,X\rangle\sum_{l=0}^{k-1}{k\choose l}\lambda^l\langle\xi,Z\rangle^{k-l}}
		\right)\\
		&=Z(\lambda)\lambda^k\langle\xi,X\rangle,
	\end{split}
	\]
	which entails $\phi_k=Z(\lambda)\lambda^k\xi$.
\end{proof}
With this result at hand, we can go back to the
\begin{proof}(of %Theorem
	Proposition \ref{thm:inv3}) We want to prove that $\{I_k,I_j\}=0$ for all $k,j$ with $k>j$. 
	Since the 1-forms $\phi_k$ are multiple of $\xi$, see \eqref{eq:phik}, and $\pi^\sharp\xi=0$, see (\ref{prop:xi2}),
	the right-hand side of \eqref{eq:recursionphi} is %(identically) 
	zero. In other words, that identity collapses to
	\[
	\{I_k,I_j\}=\{I_{k-1},I_{j+1}\}.
	\] 
	If $k-1=j+1$ we are done, otherwise one can apply again this argument, which, after a finite number of steps, will entail the thesis.
\end{proof}

\begin{remark} 
	\label{rem:torsion-of-k-power}
	Using (\ref{eq: T_N computation}) and (\ref{eq:Nalk}), one obtains the following 
	generalization of (\ref{eq: T_N 3D PqN}), giving the Nijenhuis torsion of the $k$-th power of the tensor 
	$N=\lambda I+Z\otimes\xi$:
	\begin{equation}
		T_{N^k}(X,Y)=f_k Z(\lambda^k)i_\xi(X\wedge Y).\label{eq: T_N 3D PqN-k}
	\end{equation}
	This could also be deduced from the general expression in Remark 3 of \cite{BogoTens}.
\end{remark}

We close this subsection with the following remark, where we compare the result just obtained with Theorem \ref{theo:inv}, giving sufficient conditions for the involutivity of a (general) PqN manifold.
\begin{remark} \label{rem:FMP2024}
	Applying $\pi^\sharp$ to both sides of \eqref{eq:genrec} and using again (\ref{prop:xi2}) and Lemma \ref{lem:phik}, one arrives to the identity $X_{k+1}=N X_k$, which holds for all $k\geq 1$ and which, if iterated, gives $X_{k+1}=N^{k}X_1$ for all $k\geq 1$.
	This observation implies that the vector fields $Y_k$ appearing in item (b) of Theorem \ref{theo:inv} are identically zero. 
	On the other hand, one can find a 2-form $\Omega$ such that $\phi=-Z(\lambda)\mathcal V=-2dI_1\wedge\Omega$, see for example \cite{Pham}, Chapter 3, Section 1.1. From these observations we can deduce that both conditions of Theorem \ref{theo:inv} are satisfied, and we obtain another proof of %Theorem
	Proposition \ref{thm:inv3}.  
\end{remark}

\subsection{Recollection on the Haantjes torsion}\label{ss:haantjes}

To put the definition \eqref{eq:haant} of the Haantjes torsion in context, we need to introduce a few notions, which we enclose in the following 

\begin{defi}[See \cite{tempestatondo} and \cite{BenentiChanuRastelli}]
\begin{itemize}
\item[]
\item A frame over the open set $U\subset M$ is a collection ${\mathcal X}:=\{X_1,\dots,X_n\}$ of $n=\dim M$ vector fields which generate the tangent space at each point of $U$. 
\item Two frames $\mathcal X:=\{X_1,\dots,X_n\}$ and $\mathcal Y:=\{Y_1,\dots,Y_n\}$ over $U$ are called equivalent if there exists a collection of never vanishing smooth functions $f_1,\dots,f_n$ over $U$ such that $X_i=f_i Y_i$, for all $i=1,\dots,n$.
\item A %reference 
frame $\mathcal X=\{X_1,\dots,X_n\}$ over $U$ is called integrable if for all $p\in U$ there exists a chart around $p$, 
say $(V,x_1,\dots,x_n)$, such that the restriction of $\mathcal X$ to $V$ is equivalent to the frame $\{\frac{\partial}{\partial x_1},\dots,\frac{\partial}{\partial x_n}\}$.
\item A tensor field $N$ of type $(1,1)$ is called semi-simple if each point $p\in M$ has a neighborhood $U$ where one can find a %\emph{reference 
frame %} 
formed by eigenvectors of $N$. Such a frame is said to be an eigenframe for $N$.
\end{itemize}
\end{defi}

It is also worth recalling %the following result
\begin{prop}%[See Proposition 19 in \cite{tempestatondo}] 
A reference frame $\mathcal X=\{X_1,\dots,X_n\}$ on $U$ is integrable if and only if one of the two following equivalent conditions is verified:
\begin{enumerate}
\item For every $i,j\in\{1,\dots,n\}$, the distribution generated by $\{X_i,X_j\}$ is Frobenius integrable.
\item The distribution generated by $\mathcal X\backslash\{X_i\}$ is Frobenius integrable.
\end{enumerate}
\end{prop}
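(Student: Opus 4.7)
\medskip\noindent\textbf{Proof proposal.}
The plan is to prove the cycle
\emph{integrable} $\Rightarrow$ (2) $\Rightarrow$ (1) $\Rightarrow$ (2) $\Rightarrow$ \emph{integrable},
where the two middle implications give the equivalence of the Frobenius conditions, while the outer ones give equivalence with integrability of the frame.

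First I would dispatch the easy implication: if $\mathcal X$ is integrable, then around every point there is a chart $(V,x_1,\dots,x_n)$ and never vanishing functions $f_1,\dots,f_n$ with $X_i=f_i\partial/\partial x_i$ on $V$. The distribution spanned by any subset of $\{X_1,\dots,X_n\}$ coincides pointwise with the distribution spanned by the corresponding coordinate vector fields, which is obviously Frobenius integrable (its leaves are the coordinate slices). This proves both (1) and (2) from integrability.

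Next I would establish the equivalence (1)$\Leftrightarrow$(2) at the purely algebraic level of Lie brackets. Condition (1) is equivalent, by Frobenius, to $[X_i,X_j]\in\mathrm{span}(X_i,X_j)$ for every $i,j$, while (2) is equivalent to $[X_j,X_k]\in\Delta_i:=\mathrm{span}(\mathcal X\setminus\{X_i\})$ for every $i\ne j,k$. If (1) holds then $[X_j,X_k]=aX_j+bX_k\in\Delta_i$ whenever $i\notin\{j,k\}$, giving (2). Conversely, if (2) holds then for each pair $j\ne k$ we have $[X_j,X_k]\in\bigcap_{i\ne j,k}\Delta_i$, and this intersection equals $\mathrm{span}(X_j,X_k)$ because $\mathcal X$ is a frame; hence (1).

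The main step, and the one I expect to require the most care, is (2) $\Rightarrow$ \emph{integrable}. Fix $p\in U$. For each $i$, Frobenius applied to the codimension-one distribution $\Delta_i=\mathrm{span}(\mathcal X\setminus\{X_i\})$ yields, on some neighborhood of $p$, a submersion $u_i$ whose level sets are integral leaves of $\Delta_i$; equivalently
\[
X_j(u_i)=0\quad\text{for all }j\ne i,\qquad X_i(u_i)\ne 0.
\]
Shrinking to a common neighborhood $V$ of $p$, the matrix $\bigl(X_j(u_i)\bigr)_{i,j}$ is diagonal with nonzero diagonal, so $du_1\wedge\cdots\wedge du_n\ne 0$ on $V$ and $(u_1,\dots,u_n)$ is a chart around $p$. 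In these coordinates the condition $X_j(u_i)=0$ for $i\ne j$ forces $X_j=c_j\,\partial/\partial u_j$, and $c_j=X_j(u_j)\ne0$, so $\mathcal X|_V$ is equivalent to $\{\partial/\partial u_1,\dots,\partial/\partial u_n\}$, i.e., $\mathcal X$ is integrable.

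The main obstacle I anticipate is essentially bookkeeping: one must verify that the local defining functions $u_i$ furnished independently by Frobenius on each $\Delta_i$ can be chosen on a common neighborhood, that the $du_i$ are linearly independent there, and that the equivalence is compatible with the definition (i.e., up to multiplication by never vanishing functions). Apart from this, the argument is a clean application of Frobenius, with the key algebraic identity $\bigcap_{i\ne j,k}\Delta_i=\mathrm{span}(X_j,X_k)$ mediating between the rank-two and the codimension-one formulations.
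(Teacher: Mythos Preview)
Your argument is correct and is essentially the standard proof of this classical fact. Note, however, that the paper does not actually prove this proposition: it is stated in the Appendix as a recalled result (introduced with ``It is also worth recalling''), with an implicit reference to \cite{tempestatondo} and \cite{BenentiChanuRastelli}, so there is no proof in the paper to compare your proposal against. Your cycle \emph{integrable} $\Rightarrow$ (1),(2), then (1) $\Leftrightarrow$ (2) via the identity $\bigcap_{i\ne j,k}\Delta_i=\mathrm{span}(X_j,X_k)$, and finally (2) $\Rightarrow$ \emph{integrable} via the Frobenius-produced submersions $u_i$, is exactly how one proves it; the bookkeeping concerns you raise (common neighborhood, independence of the $du_i$) are genuine but routine, and you handle them correctly.
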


The integrability of eigenframes for a given $(1,1)$ tensor field is controlled by the Haantjes torsion of $N$. 
More precisely, the vanishing of the Haantjes torsion of $N$ is a necessary and sufficient condition for the integrability of its eigenframes, see Theorem 25 in \cite{tempestatondo} and references therein. 

\section{Final comments and outlook}\label{sec:comfin} PN and bi-Hamiltonian structures has been proven crucial in the analysis of the complete integrability of classical Hamiltonian systems. In the series of papers \cite{FMOP2020,FMP2023,FMP2024}, it has been observed that under some stringent assumptions, one could use the notion of a PqN structure to describe the complete integrability of some classical and important integrable system from a geometrical point of view. To this end, the notion of involutive PqN structure was introduced and the one of a deformation of a PqN structure was analyzed.

In this paper first we characterize the PqN structures defined on oriented three dimensional manifolds whose underlying Poisson tensor never vanishes, see Theorem \ref{theo:charN}. Then we show that every such PqN structure is involutive, see Proposition \ref{thm:inv3} and that, \emph{locally}, it comes from a deformation of a PN one, see Proposition \ref{prop:local}. Theorem \ref{theo:haan} links this class of PqN structures with the Haantjes ones, while Theorem \ref{theo:Len-Ma-Ge} is a first attempt to build a bridge between the PqN structures and the so called generalized Lenard-Magri chains.

We believe that the results obtained in this work, although interesting in themselves, are preliminary to future research.
A few directions we would like to explore are the following ones.

First, we would like to investigate the possibility to extend to higher dimension the results enclosed in Section \ref{sec:3}. More precisely, as a preliminary checks, one could try to see if $\mathbb R^{2n+1}$, $n>1$, carries a PqN structure whose (1,1) tensor has the analogue form of \eqref{eq:thanksRef} and whose Poisson tensor, in Darboux coordinates $(x^1,\dots,x^n,y_1,\dots,y_n,z)$, is of the type $\pi=\sum_{i=1}^n\frac{\partial}{\partial x^i}\wedge\frac{\partial}{\partial y_i}$. If such structures exist, one could ask if for them the results in Propositions \ref{prop:local} and \ref{thm:inv3} hold true. 

Another direction we would like to pursue is to investigate under what assumptions a PqN structure in dimension greater than 3 is \emph{Haantjes torsion free}, see Theorem \ref{theo:haan}. We think that anything in this direction could be relevant to connect the theory of PqN structures to the one of dynamical systems of hydrodynamical type, see for example \cite{DN1,DN2}. It is already an interesting problem to look for systems of hydrodynamical type which can be defined from the PqN structures described in this paper, see \cite{FMS}. 

Finally, it could be interesting to see if the result  in Subsection \ref{ss:tempton}
%\ref{ss:tempton} 
could be extended to higher dimension.

%\blu{[Is semisimplicity needed for this result? If not, why do we recall the definition of semisimplicity?]  Possible asnwer: Such definition is related with ``eigenframe'' which is mentioned in the last paragraph.}

%\bigskip\bigskip

%\blu{\todo{DA FARE SOLO PRIMA DI SPEDIRE IL LAVORO: controllare se le references qui sotto sono tutte citate.}}

\thebibliography{99}

\bibitem{Antunes2008}
Antunes, P., {\it Poisson quasi-Nijenhuis structures with background}, Lett. Math. Phys. {\bf 86} (2008), 33--45.

\bibitem{BenentiChanuRastelli} Benenti, S., Chanu, C., Rastelli, G., {\it Remarks on the connection between the additive separation of the Hamilton-Jacobi equation and the multiplicative separation of the Schr\"odinger equation. I. The completeness and Robertson conditions}, J. Math. Phys. {\bf 43} (2002), 5183--5222.
%\bibitem{Bogo96-180} Bogoyavlenskij, O.I., {\it Theory of Tensor Invariants of Integrable Hamiltonian Systems. I. Incompatible Poisson Structures}, 
%Commun. Math. Phys. {\bf 180} (1996), 529--586.

\bibitem{Bogo96-182}  Bogoyavlenskij, O.I., {\it Necessary Conditions for Existence of Non-Degenerate Hamiltonian Structures}, Commun. Math. Phys. {\bf 182} (1996), 253--290.

\bibitem{BogoTens} Bogoyavlenskij, O.I., {\it Algebraic identities for the Nijenhuis tensor}, Diff. Geom. Appl. {\bf 24} (2006) 447--457.

%\bibitem{BKM2022}
%{Bolsinov, A.V., Konyaev, A.Yu., Matveev, V.S.}, {\it Nijenhuis geometry}, {Adv. Math.} {\bf 394} (2022), 52 pages.

\bibitem{Bonechi} Bonechi, F., {\it  Multiplicative integrable models from Poisson-Nijenhuis structures}, in: {\rm From Poisson brackets to universal quantum symmetries}, Banach Center Publ. {\bf 106}, Warsaw, 2015, pp.\ 19--33.

%\bibitem{BursztynDrummond2019}  Bursztyn, H., Drummond, T.,
%{\it Lie theory of multiplicative tensors}, Math.\ Ann.\ {\bf 375} (2019), 1489--1554.

\bibitem{BursztynDrummondNetto2021} Bursztyn, H., Drummond, T., Netto, C.,
{\it Dirac structures and Nijenhuis operators}, Math. Z. {\bf 302} (2022), 875--915.

%https://doi.org/10.48550/arXiv.2109.06330

\bibitem{Cheng-Sheng} Chen, B., Sheng, Y., {\it Poisson-Nijenhuis structures on oriented 3D-manifolds}, 
Rep. Math. Phys. {\bf 61} (2008), 361--380. 

\bibitem{FMP2024}  Chu\~no Vizarreta, E., Falqui, G., Mencattini, I., Pedroni, M.,  {\it Poisson quasi-Nijenhuis manifolds, closed Toda lattices, and generalized recursion relations}, Lett. Math. Phys. {\bf 115}, no. 84 (2025), 22 pages. 

\bibitem{FMP2026}  Chu\~no Vizarreta, E., Falqui, G.; Mencattini, I., Pedroni,  M., {\it An involutivity theorem for a class of Poisson quasi-Nijenhuis manifolds}, arXiv:2603.06532. %, 22 pages. 

\bibitem{C-NdC-2010} Cordeiro, F., Nunes da Costa, J.M.,
{\it Reduction and construction of Poisson quasi-Nijenhuis manifolds with background}, 
Int. J. Geom. Methods Mod. Phys. {\bf 7} (2010), 539--564.

%\bibitem{Damianou99} Damianou, P.A., {\it Multiple Hamiltonian structures for Toda systems of type A-B-C}, Regul. Chaotic Dyn. {\bf 5} (2000), 17--32.

%\bibitem{Damianou04rev} Damianou, P.A., {\it Multiple Hamiltonian structures of Bogoyavlensky-Toda lattices}, Rev. Math. Phys. {\bf 16} (2004), 175--241.

%\bibitem{Damianou04} Damianou, P.A., {\it On the bi-Hamiltonian structure of Bogoyavlensky-Toda lattices}, 
%Nonlinearity {\bf 17} (2004), 397--413. 

\bibitem{DO} Das, A., Okubo, S., {\it A systematic study of the Toda lattice}, Ann. Physics {\bf 190} (1989), 215--232.

\bibitem{DMP2024} do Nascimento Luiz, M., Mencattini, I., Pedroni, M.,
{\it Quasi-Lie bialgebroids, Dirac structures, and deformations of Poisson quasi-Nijenhuis manifolds\/}, 
Bull.\ Braz.\ Math.\ Soc.\ (N.S.) {\bf 55} (2024), 18 pages.

\bibitem{DN1} Dubrovin, B.A., Novikov, S.P., {\it Poisson brackets of hydrodynamic type}, Dokl. Akad. Nauk SSSR {\bf 279} (1984),  294--297.

\bibitem{DN2} Dubrovin, B.A., Novikov, S.P., {\it Hydrodynamics of weakly deformed soliton lattices. Differential geometry and Hamiltonian Theory}, Russ. Math. Surv. {\bf 44} (1989), 35--124.

%\bibitem{FMP2001} Falqui, G., Magri, F., Pedroni, M.,
%{\it Bihamiltonian geometry and separation of variables for Toda lattices\/}, J.\ Nonlinear
%Math.\ Phys.\ {\bf 8} (2001), suppl., 118--127.

%\bibitem{FMT2000} Falqui, G., Magri, F., Tondo, G.,
%{\it Reduction of bi-Hamiltonian systems and the separation of variables: an example from the Boussinesq hierarchy\/}, 
%Theoret.\ and Math.\ Phys.\ {\bf 122} (2000), 176--192.

\bibitem{FMOP2020} Falqui, G., Mencattini, I., Ortenzi, G., Pedroni, M.,
{\it Poisson quasi-Nijenhuis manifolds and the Toda system\/}, Math.\ Phys.\ Anal.\ Geom.\ {\bf 23} (2020), 17 pages.

\bibitem{FMP2023} Falqui, G., Mencattini, I., Pedroni, M.,
{\it Poisson quasi-Nijenhuis deformations of the canonical PN structure\/}, J.\ Geom.\ Phys.\  {\bf 186} (2023), 10 pages.

%\bibitem{FP03} Falqui, G., Pedroni, M., {\it Separation of variables for bi-Hamiltonian systems\/}, Math.\ Phys.\ Anal.\ Geom.\ 
%{\bf 6} (2003), 139--179.

\bibitem{FeraMar} Ferapontov, E.V., Marshall, D.G., {\it Differential-geometric approach to the integrability of hydrodynamic chains: the Haantjes tensor}, Math. Ann. {\bf 339} (2007), 61--99.

\bibitem{FMS} Ferapontov, E.V., Moro, A., Sokolov, V.V., {\it Hamiltonian Systems of Hydrodynamics Type in 2+1 Dimensions}, 
Commun. Math. Phys. {\bf 285} (2009), 31--65.  

%\bibitem{FiorenzaManetti2012} Fiorenza, D., Manetti, M.,
%{\it Formality of Koszul brackets and deformations of holomorphic Poisson manifolds},
%Homology Homotopy Appl. {\bf 14} (2012), 63--75. 

%\bibitem{ILX} Iglesias-Ponte, D., Laurent-Gengoux, C., Xu, P., {\it Universal lifting theorem and quasi-Poisson groupoids}, 
%J. Eur. Math. Soc. (JEMS) {\bf 14} (2012), 681--731. 

\bibitem{Haantjes} Haantjes, J., {\it On $X_{n-1}$-forming sets of eigenvectors}, Indag. Math. {\bf 17} (1955), 158--162.

%\bibitem{YKS96} Kosmann-Schwarzbach, Y., {\it The Lie Bialgebroid of a Poisson-Nijenhuis Manifold}, Lett. Math. Phys. {\bf 38} (1996), 421--428.

\bibitem{KM} Kosmann-Schwarzbach, Y., Magri, F., {\it Poisson-Nijenhuis structures}, Ann. Inst. Henri Poincar\'e {\bf 53} (1990), 35--81.

\bibitem{Kos-Recursion} Kosmann-Schwarzbach, Y., {\it Beyond recursion operators}, in: 
Geometric methods in physics XXXVI, Trends Math., Birkh\"auser/Springer, Cham, 2019, pp.\ 167--180.

%\bibitem{YKS-Rubtsov} Kosmann-Schwarzbach, Y., Rubtsov, V., {\it Compatible Structures on Lie Algebroids and Monge-Amp\`ere Operators}, Acta Appl. Math. {\bf 109} (2010), 101--135.

\bibitem{L-GPV} Laurent-Gengoux, C., Pichereau, A., Vanhaecke, P., {\it Poisson structures}, Grundlehren der mathematischen Wissenschaften, Volume 347, Springer-Verlag, Berlin Heidelberg, 2013.

%\bibitem{LWX} Liu, Z-J., Weinstein, A., Xu, P., {\it Manin Triples for Lie Bialgebroids}, J. Differential Geom. {\bf 45} (1997), 547--574.

%\bibitem{Magri2003} Magri, F., 
%{\it Lenard chains for classical integrable systems}, Theoret.\ and Math.\ Phys.\ {\bf 137} (2003), 1716--1722.

\bibitem{MagriHaa} Magri, F., {\it Haantjes manifolds}, Journal of Physics: Conference Series {\bf 482} (2014), 012028.

\bibitem{MagriVeselov} Magri, F., {\it Haantjes manifolds and Veselov systems}, Theoret.\ and Math.\ Phys.\ {\bf 189} (2016), 1486--1499.

\bibitem{MagriSymm} Magri, F., {\it  Haantjes manifolds with symmetries}, Theoret.\ and Math.\ Phys.\ {\bf 196} (2018), 1217--1229.

\bibitem{Magri-Morosi} Magri, F., Morosi, C., {\it A geometrical characterization of integrable Hamiltonian systems through the theory of Poisson-Nijenhuis manifolds}, Quaderno S/19, Milan, 1984. Re-issued: Universit\`a di Milano-Bicocca, Quaderno 3, 2008.
Available at https://boa.unimib.it/retrieve/e39773b1-610f-35a3-e053-3a05fe0aac26/quaderno3-2008.pdf

%\bibitem{MagriMorosiRagnisco85} Magri, F., Morosi, C., Ragnisco, O.,
%{\it Reduction techniques for infinite-dimensional Hamiltonian systems: some ideas and applications},
%Comm. Math. Phys. {\bf 99} (1985), 115--140. 

%\bibitem{MorosiPizzocchero96} Morosi, C., Pizzocchero, L., {\it $R$-Matrix Theory, Formal Casimirs and the Periodic Toda Lattice}, 
%J. Math. Phys. {\bf 37} (1996), 4484--4513. 

%\bibitem{NDC-D} Nunes da Costa, J.M., Damianou, P.A., {\it Toda systems and exponents of simple Lie groups}, Bull. Sci. Math. {\bf 125} (2001), 49--69.

%\bibitem{Nunes-Marle} Nunes da Costa, J.M., Marle, C.-M., \emph{Reduction of bi-Hamiltonian manifolds and recursion operators}. In: Differential geometry and applications (Brno, 1995), Masaryk Univ., Brno, 1996, pp. 523--538. 

%\bibitem{Okubo90} Okubo, S., {\it Integrability condition and finite-periodic Toda lattice},
%J. Math. Phys. {\bf 31} (1990), 1919--1928.

%\bibitem{Perelomov-book} Perelomov, A.M., {\it Integrable systems of classical mechanics and 
%Lie algebras. Vol. I}, Birkh\"auser Verlag, Basel, 1990.

\bibitem{Pham} Pham, F., {\it Singularities of integrals}, Universitext, Springer, 2010.

\bibitem{SX} Sti\'enon, M., Xu, P., {\it Poisson Quasi-Nijenhuis Manifolds}, Commun. Math. Phys. {\bf 270} (2007), 709--725.

\bibitem{tempestatondoben} Tondo, G., Tempesta, P., {\it Haantjes structures for the Jacobi-Calogero model and the Benenti systems}, SIGMA {\bf 12} (2016), 023, 18 pages.

\bibitem{tempestatondo} Tempesta, P., Tondo, G., {\it Haantjes algebras and diagonalization}, J. Geom. Phys. {\bf 160} (2021), %103868
21 pages.

\bibitem{tempestatondohigher} Tempesta, P., Tondo, G., {\it Higher Haantjes brackets and integrability}, Comm. Math. Phys. {\bf 389} %, N.3, 
(2022), 1647--1671.

\bibitem{tempestatondoclass} Tempesta, P., Tondo, G., {\it Haantjes algebras of classical integrable systems},
Ann. Mat. Pura Appl. {\bf 201} %, N. 1,
(2022), 57--90.

\bibitem{TondoLag} Tondo, G., {\it Haantjes algebras of the Lagrange Top}, 
Theoret.\ and Math.\ Phys.\ {\bf 196} %(3), 
(2018), 1366--1379.

%\bibitem{Tondo1995} Tondo, G., {\it On the integrability of stationary and restricted flows of the KdV hierarchy}, 
%J. Phys. A {\bf 28} (1995), 5097--5115.

%\bibitem{Turiel} Turiel, F.-J., {\it Classification locale d'un couple de formes symplectiques Poisson-compatibles}, 
%C.R. Acad. Sci. Paris S\'er. I Math. {\bf 308} (1989), 575--578.

%\bibitem{Izu} Vaisman, I., {\it The Geometry of Poisson Manifolds}, Birk\"auser Verlag, Basel, 1994.
%
%\bibitem{Complementary} Vaisman, I., {\it Complementary $2$-forms of Poisson structures}, Compositio Math. {\bf 101} (1996), 55--75.

\bibitem{Zucchini} Zucchini, R., {\it The Hitchin model, Poisson-quasi-Nijenhuis, geometry and symmetry reduction}, 
J. High Energy Phys. {\bf 2007}, %no. 10, 075
29 pages.

\end{document}